\newcounter{minutes}\setcounter{minutes}{\time}
\newcounter{hours}\setcounter{hours}{\time}
\theoremstyle{plain}
\newtheorem{thm}[equation]{Theorem}
\newtheorem{lem}[equation]{Lemma}
\newtheorem{example}[equation]{Example}
\newtheorem{prop}[equation]{Proposition}
\theoremstyle{definition}
\theoremstyle{remark}
\newtheorem{rem}[equation]{Remark}
\newcommand{\R}{{\mathbb R}}
\newcommand{\N}{{\mathbb N}}
\newcommand{\B}{\mathbf B}
\renewcommand{\S}{\mathbf S}
\newcommand{\U}{\mathbf U}
\newcommand{\eps}{\varepsilon}
\def\be{\begin{equation}}
\def\ee{\end{equation}}
\newcommand{\X}{\mathrm{X}}
\def\tang{\mathrel{\hbox{\rlap{%
\kern 0.2ex\hbox to 1.8ex{\hrulefill}}\raise2.6pt\hbox{$\bigcirc$}}}}
\numberwithin{equation}{section}
\begin{document}

%===============================================================================
%%%%%%%% BEGIN TIMESTAMP
\def\thefootnote{}
\footnotetext{ \texttt{\tiny File:~\jobname .tex,
          printed: \number\year-\number\month-\number\day,
          \thehours.\ifnum\theminutes<10{0}\fi\theminutes}
} \makeatletter\def\thefootnote{\@arabic\c@footnote}\makeatother
%%%%%%%% END TIMESTAMP
%===============================================================================

%=====================================================================
\title{On smoothness of quasihyperbolic balls}

\author{Riku Kl\'en}
\address{Department of Mathematics and Statistics, University of Turku, FI-20014 Turku, Finland}
\email{riku.klen@utu.fi}

\author{Antti Rasila}
\address{Department of Mathematics and Systems Analysis, Aalto University, P.O. Box 11100, FI-00076 Aalto, Finland}
\email{antti.rasila@iki.fi}

\author{Jarno Talponen}
\address{University of Eastern Finland, Institute of Mathematics, Box 111, FI-80101 Joensuu, Finland}
\email{talponen@iki.fi}
%=========================================================================

%===============================================================================
\begin{abstract}
We investigate properties of quasihyperbolic balls and geodesics in Euclidean and Banach spaces. Our 
main result is that in uniformly smooth Banach spaces a quasihyperbolic ball of a convex domain is $C^1$-smooth. The question about the smoothness of quasihyperbolic balls is old, originating back to the discussions of F.W. Gehring and M. Vuorinen in 1970's. To our belief, the result is new also in the Euclidean setting. We also address some other issues involving the smoothness of quasihyperbolic balls.

We introduce an interesting application of quasihyperbolic metrics to renormings of Banach spaces. 
To provide a useful tool for this approach we turn our attention to the variational stability of quasihyperbolic geodesics. Several examples and illustrations are provided.
\end{abstract}
%===============================================================================

\maketitle

%===============================================================================
{\small \sc Keywords.} {Quasihyperbolic metric, geodesics, uniqueness, smoothness, convexity, renorming}

{\small \sc 2010 Mathematics Subject Classification.} {30C65, 46T05, 46B03}
%===============================================================================

%%%%%%%%%%%%%%%%%%%%%%%%%%%%%%%%%%%%%%%%%%%%%%%%%%%%%%%

\section{Introduction}

%- Send to JFA...? Because of geometry, renorming aspects... and the journal is favorable to other stuff with close connections to FA.

The \emph{quasihyperbolic metric} in $\R^n$ is a natural generalization of the hyperbolic metric, introduced  by Gehring \cite{GehringOsgood79,GehringPalka76} and his students Palka and Osgood in 1970's as a tool for studying quasiconformal mappings. Since its introduction, this metric has found numerous applications to the geometric function theory \cite{Heinonen01,Vuorinen88}. Furthermore, quasihyperbolic metric can be studied in more general settings than $\R^n$, such as Banach spaces and even in general metric spaces. It has particular significance in the infinite dimensional settings, where many traditional tools, such as the conformal modulus, cannot be used. This approach to geometric function theory was developed by J. V\"ais\"al\"a in a series of papers. This theory  is called ``the free quasiworld'' according to V\"ais\"al\"a (see \cite{Vai99}, and references therein). Introductory discussion and motivations for the study of this topic are presented in detail in the survey article \cite{KRT}. 

In this paper, we continue our investigation of the properties of this metric in Euclidean and Banach spaces, see \cite{Klen08,Klen09, RasilaTalponen12,RasilaTalponen14}. We will settle a long-standing problem of whether quasihyperbolic balls of a convex domain on a uniformly smooth Banach space are smooth (see Remark \ref{gehring_rugby}). Our smoothness considerations yield as a byproduct a new renorming technique of Banach spaces as well. In fact, it turns out that the quasihyperbolic metric is differentiable in a large dense set of points if the Banach space in question satisfies some modest regularity assumptions, e.g. it is separable or has a separable dual, see Theorem \ref{thm: asplund}. This approach is made possible by the fact that the underlying Banach space geometry is conveyed to the geometric properties of the quasihyperbolic metric. Thus we have access to the functional analysis machinery in studying the properties of the quasihyperbolic metric. This includes topics such as the convexity of balls and smoothness of geodesics, and an approach to analyzing these matters was developed in our earlier papers \cite{RasilaTalponen12,RasilaTalponen14}.

This paper is organized as follows. Shortly, in Section \ref{sect: motiv}, we explain the 
justification for the quasihyperbolic metric. After that we provide the main references and 
some more required definitions. In Section \ref{sect: smooth} we prove our main result involving the 
$C^1$-smoothness of the quasihyperbolic metric. As mentioned above, we also prove that under rather weak 
assumptions on the Banach space the metric is differentiable in a dense large set of points. Also, some 
results related to smoothness of the inner metrics are given in a purely metric setting.
In Section \ref{sect: ultra} we develop geometric tools by using non-standard analysis, stating roughly that 
if two families of paths are close after a quasihyperbolical state change, then they are close in absolute terms.  
Then, in Section \ref{sect: renorm}, we discuss a promising and unexpected connection between 
geometric function theory and functional analysis, namely using the quasihyperbolic metric to construct 
Banach space renormings. We apply the abovementioned tool in the proof of Theorem \ref{thm: unifconv}
and in this connection it becomes a kind of variational principle. The main goal there is approximating arbitrary equivalent norms with well behaved ones induced by quasihyperbolic balls. Finally, we give some counterexamples in Hilbertian and Euclidean spaces.

\subsection{Geometric motivation of the quasihyperbolic metric}\label{sect: motiv}

Suppose that $\X$ is a Banach space and $\Omega \subset \X$ is a domain with non-empty boundary. Denote by 
$d(x,\partial\Omega)$ the distance of the point $x\in \Omega$ from the boundary of $\Omega$. Then the {\it quasihyperbolic (QH) metric} on the domain $\Omega$ is defined by the formula
\begin{equation}
\label{qhdef}
k_\Omega(x,y) = \inf_{\gamma}\int_\gamma \frac{\|dz\|}{d(z,\partial\Omega)},
\end{equation}
where the infimum is taken over all rectifiable curves $\gamma$ in $\Omega$ connecting the points $x,y$ in $\Omega$. If the infimum is attained for some rectifiable curve $\gamma$, this curve is called a {\it quasihyperbolic geodesic}. If there is no danger of confusion, we write $k(x,y)$ instead of $k_\Omega(x,y)$.

The formula \eqref{qhdef} has several important special cases. For $\X=\R^n$, $n\ge2$ and $\Omega=\mathbb{H}^n=\{ x\in \R^n:x_n>0\}$, i.e., the upper half-space, we obtain the well-known {\it hyperbolic metric} $\rho$ (see \cite{Beardon}), also known as the {\it Poincar\'e metric}. The hyperbolic metric can also be developed in the unit disk $\mathbb{B}^2$ by using the formula
\[
\rho_{\mathbb{B}^2}(x,y)=\inf_\gamma \int_{\gamma}  \frac{2|dz|}{1-|z|^2}\quad x,y\in \mathbb{B}^2,
\]
where the infimum is taken over all regular curves $\gamma$ connecting $x$ and $y$. This metric is conformally invariant in the following sense. Suppose that $w=f(z)$ is a conformal mapping of the unit disk onto itself. Then, by Pick's lemma, we have the identity
\[
\bigg|\frac{dw}{dz}\bigg| = \frac{1-|w|^2}{1-|z|^2},
\text{ or }
\frac{|dw|}{1-|w|^2} = \frac{|dz|}{1-|z|^2}.
\]
This means that, for any regular curve $\gamma$ in the unit disk, we have
\[
\int_{f\circ\gamma} \frac{|dw|}{1-|w|^2} = \int_{\gamma}  \frac{|dz|}{1-|z|^2}.
\]
Note that the denominators above are asymptotically equivalent to that in \eqref{qhdef}.

By using conformal invariance, the hyperbolic metric can be studied for other simply connected domains in the plane, and in the case of the half-plane it coincides with the metric defined by \eqref{qhdef}. But even in $\R^n$, the hyperbolic metric cannot be defined for domains other than half-spaces and balls for $n\ge 3$. The method based on conformal invariance does not apply to
general Banach spaces.

In general, the quasihyperbolic metric is not conformally invariant, but it behaves well under conformal and even quasiconformal mappings. This fact is of particular importance in infinite dimensional spaces, where many convenient tools for studying quasiconformal mappings, such as local compactness and measure, are not available. Besides quasiconformal mappings, the quasihyperbolic metric has recently found novel and interesting applications in other fields of geometric analysis as well. For example, quasihyperbolic metric has been recently used in study of the Poincar\'e inequality \cite{jiang-kauranen,koskela-onninen-tyson}.

From basic analysis of the hyperbolic disk and conformal invariance, it immediately follows that the hyperbolic geodesics originating from the point $x\in \Omega$ are always orthogonal to the surfaces of hyperbolic balls centered at $x$ (see Figure \ref{fig:hyper}). It is not obvious that a similar property holds for the quasihyperbolic metric. This very useful connection between quasihyperbolic geodesics and balls will be established in Theorem \ref{orto}, and then exploited to obtain further results on quasihyperbolic balls.
We refer to the book of Vuorinen \cite{Vuorinen88} for the basic properties of the quasihyperbolic metric in $\R^n$, and the comprehensive survey article of V\"ais\"al\"a \cite{Vai99} for the basic results in Banach spaces.

  \begin{figure}[ht]
    \includegraphics[width=6.5cm]{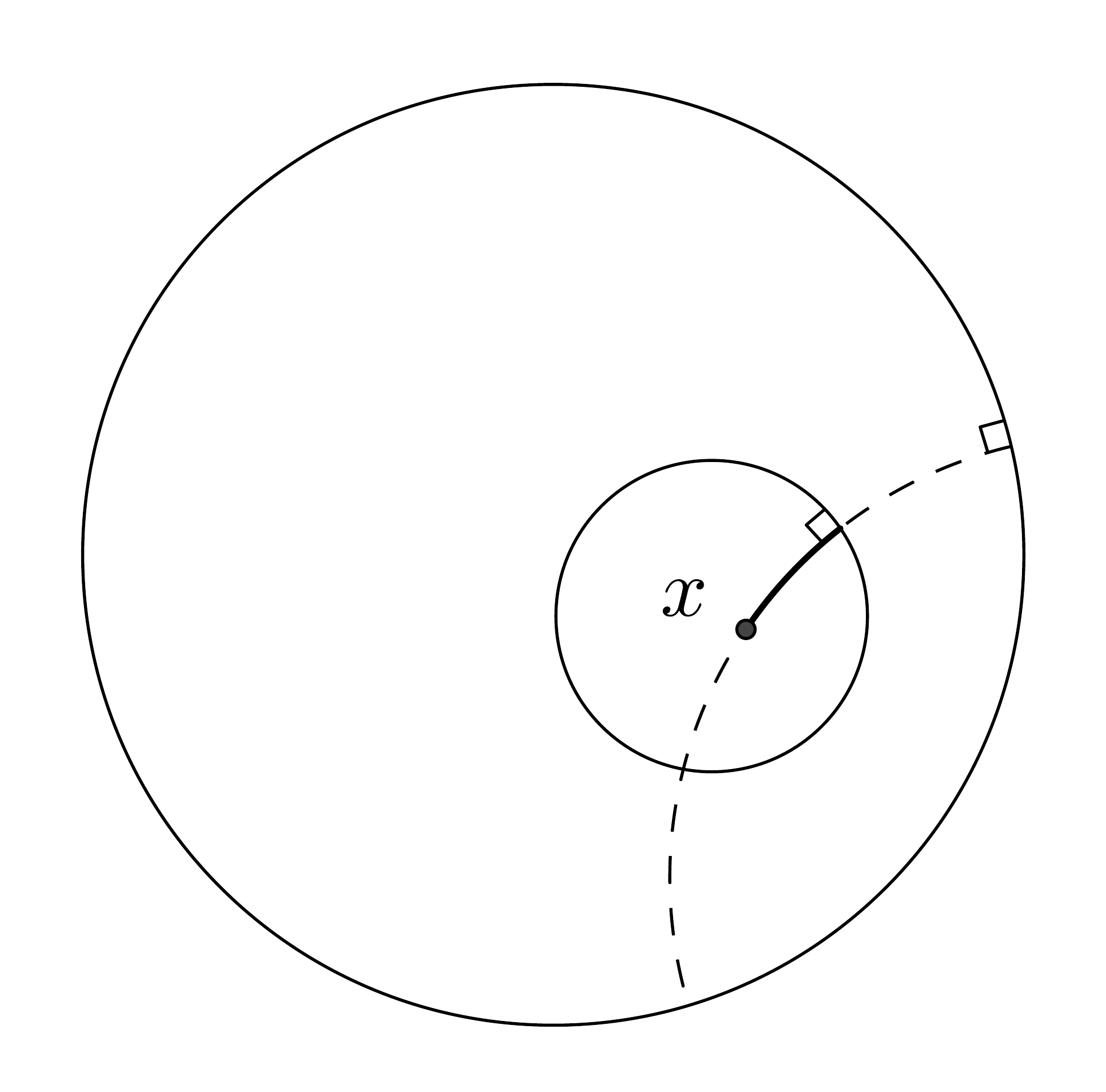}
    \caption{A geodesic radius in the hyperbolic disk.} \label{fig:hyper}
  \end{figure}

%Poincare disk...

%- If this goes to JFA there is good chance it will be refereed by French or like-minded, because of renorming aspect, thus more Catholic `elevating' style,  rather than Finnish school precision...

\subsection{Preliminaries}

We refer to \cite{Beardon} , \cite{DGZ} , \cite{FA_book}, \cite{Heinonen01}, \cite{Vuorinen88} and \cite{Vai99} for background information.
Many of the arguments here are written concisely, using reasoning similar to that in \cite{RasilaTalponen12} and \cite{RasilaTalponen14}.
By a \emph{domain} $\Omega \subset \X$ of a Banach space we mean an open path-connected subset with a non-empty boundary $\partial \Omega$. We denote by $\B(x_0 ,r)$ closed balls 
(resp. by $\U(x_0 ,r)$ open balls) with center $x_0$ and 
radius $r$ in a metric space and by $\S(x_0 ,r)=\partial \B(x_0 ,r)$ the corresponding spheres. The 
symmetric Hausdorff distance between subsets of a metric space is denoted by $d_H$.

We assume throughout that all Banach spaces considered have the Radon-Nikodym Property (RNP) 
which can be formulated as follows: Each Lipschitz path $\gamma \colon [0,1] \to \X$ is a.e. differentiable 
and the path can be recovered by Bochner integrating its derivative,
\[\gamma(t) = \gamma(0) + \int_{0}^t \gamma' (s)\ ds,\quad t\in [0,1],\]
see \cite{Diestel}.
Recall that the {\it uniform smoothness} of a Banach spaces $\X$ is defined by the following condition on $\|\cdot\|$ norm:
\[\mu_{(\X,\|\cdot\|)} (\tau)=\sup\{(\|y+h\|+\|y-h\|)/2 -1,\ y,h\in \X,\ \|y\|=1,\ \|h\|=\tau\},\]
\[
\lim_{\tau\to 0^{+}}\frac{\mu_{(\X,\|\cdot\|)}(\tau)}{\tau}=0.
\]

Suppose $\mathcal{U}$ is an {\it ultrafilter over} $\N$. If $(a_n)\subset \R$ and $a\in \R$ are such that
\[\forall\ \eps>0\ \{n\in \N\colon |a_n-a|<\eps\}\in \mathcal{U}\]
then this is by definition the statement
\[\lim_{\mathcal{U}} a_n =a.\]
We will frequently apply  the fact that 
$\liminf_{n\to\infty} a_n \leq \lim_{\mathcal{U}} a_n \leq \limsup_{n\to\infty} a_n$.

Next we recall the definition of an ultrapower $\X^\mathcal{U}$ of a Banach space $\X$.
First, consider the $\X$-valued $\ell^\infty$ space, $\ell^\infty (\X)$. 
This is obtained by replacing the real coordinates by vectors of $\X$ and the norm is defined by 
$\|(x_n )\|_{\ell^\infty (\X)}=\sup_n \|x_n \|_X$. This is a Banach space. 
Then 
\[N_\mathcal{U} :=\{(x_n) \in \ell^\infty (\X)\colon \lim_\mathcal{U} \|x_n\|_X =0\}\subset \ell^\infty (\X)\]
is a closed subspace. The ultrapower is the following quotient space
\[\X^\mathcal{U} = \ell^\infty (\X) / N_\mathcal{U}.\]
Observe that $(x_n ),(y_n ) \in \ell^\infty (\X)$ are representatives of an element $x\in \X^\mathcal{U}$ if and only if 
\[ \lim_\mathcal{U} \|x_n - y_n \|_\X =0.\]
See \cite{Heinrich} for more information on these constructions.

We will frequently use the fact that in bounded subdomains $D \subset \Omega$ uniformly separated form the boundary $\partial \Omega$ the quasihyperbolic metric is equivalent to the metric induced by the norm, 
see also the proof of Theorem \ref{thm: UF}.

\section{On smoothness of quasihyperbolic balls in infinite-dimensional setting}\label{sect: smooth}

Let us consider a convex domain $\Omega$ in a Banach space. It follows from the arguments provided by V\"ais\"al\"a and Martio in \cite{MartioVaisala} that there exists a quasihyperbolic geodesic between any two points 
in a convex domain of a locally weak-star compact (e.g. reflexive) space.

Let us consider paths as elements in $C([0,1],\X)$, the space of continuous functions $[0,1]\to \X$ with the $\sup$-norm, which is a Banach space. We will consider rectifiable paths with finite quasihyperbolic length parametrized by their quasihyperbolic length, i.e. having constant speed.

We will study the following type (multi)map: $\Lambda (x,y) \mapsto \{\gamma\}$ which assigns to each endpoints 
all the corresponding quasihyperbolic geodesics. 

\begin{prop}\label{prop: single}
In a reflexive, strictly convex Banach space with convex domain $\Omega$ the mapping $\Lambda$ is well-defined and single-valued, $\Omega \times \Omega \to C([0,1],\X)$. 
\end{prop}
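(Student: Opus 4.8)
The plan is to split the statement into \emph{well-definedness} (a geodesic exists for every pair of endpoints) and \emph{single-valuedness} (it is unique), and to observe that the hypotheses divide accordingly: reflexivity is used for existence, strict convexity for uniqueness.

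For existence I would simply invoke the result recalled just before the statement: in a reflexive space a convex domain is locally weak-star compact, so the Martio--V\"ais\"al\"a argument \cite{MartioVaisala} produces a quasihyperbolic geodesic joining any $x,y\in\Omega$. To land inside $C([0,1],\X)$ I then fix the normalization used throughout: reparametrize each geodesic by constant quasihyperbolic speed over $[0,1]$, so that $\|\gamma'(t)\|/d(\gamma(t),\partial\Omega)=k(x,y)$ for a.e.\ $t$. Since a geodesic between two interior points stays in a subdomain bounded away from $\partial\Omega$, it is norm-Lipschitz; the standing RNP assumption then yields a.e.\ differentiability together with $\gamma(t)=\gamma(0)+\int_0^t\gamma'(s)\,ds$. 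This exhibits $\Lambda(x,y)$ as a nonempty subset of $C([0,1],\X)$.

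For single-valuedness (assume $x\neq y$, the case $x=y$ being trivial) suppose $\gamma_1,\gamma_2$ are two geodesics with the common endpoints, and form the \emph{midpoint path} $\gamma=\tfrac12(\gamma_1+\gamma_2)$, which stays in $\Omega$ by convexity. Two convexity facts drive the estimate: the triangle inequality $\|\gamma'\|\le\tfrac12(\|\gamma_1'\|+\|\gamma_2'\|)$, and the concavity of $z\mapsto d(z,\partial\Omega)$ on the convex set $\Omega$, which gives $d(\gamma,\partial\Omega)\ge\tfrac12\big(d(\gamma_1,\partial\Omega)+d(\gamma_2,\partial\Omega)\big)$. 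Feeding in the constant-speed identities $\|\gamma_i'\|=k(x,y)\,d(\gamma_i,\partial\Omega)$, these combine to
\[
\frac{\|\gamma'(t)\|}{d(\gamma(t),\partial\Omega)}
\;\le\;
\frac{\|\gamma_1'(t)\|+\|\gamma_2'(t)\|}{d(\gamma_1(t),\partial\Omega)+d(\gamma_2(t),\partial\Omega)}
\;=\;k(x,y)
\]
for a.e.\ $t$. Integrating shows the quasihyperbolic length of $\gamma$ is at most $k(x,y)$; as it cannot be smaller, $\gamma$ is itself a geodesic and both displayed inequalities are equalities a.e.

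From equality in the triangle inequality and the \emph{strict convexity} of the norm I would deduce that $\gamma_1'(t)$ and $\gamma_2'(t)$ are positive multiples of one another for a.e.\ $t$; from equality in the concavity estimate I would deduce that $d(\cdot,\partial\Omega)$ is affine along each connecting segment $[\gamma_1(t),\gamma_2(t)]$. The main obstacle is to promote these pointwise equality conditions to the global identity $\gamma_1\equiv\gamma_2$: parallelism of the velocities alone does not suffice, since one can keep velocities pointwise parallel while pushing a curve off its trace, so the affineness of the distance function along the segments must be exploited to lock the two curves together before integrating against the shared boundary conditions $\gamma_1(0)=\gamma_2(0)$ and $\gamma_1(1)=\gamma_2(1)$. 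I expect this coincidence step, rather than the variational estimate, to be the delicate part; alternatively the uniqueness conclusion can be quoted from \cite{RasilaTalponen12,RasilaTalponen14}.
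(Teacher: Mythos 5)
Your proposal is correct in substance, but note that the paper's own ``proof'' of Proposition \ref{prop: single} is a one-line citation of \cite{RasilaTalponen12}, so your fallback option (quoting uniqueness from that reference) is literally what the paper does; what you have written beyond that is a genuine direct argument. The existence half and the variational half of the uniqueness argument are fine: the midpoint path, the mediant estimate combined with the constant-speed normalization $\|\gamma_i'\|=k(x,y)\,d(\gamma_i,\partial\Omega)$, and the resulting a.e.\ equality conditions are all sound (this is the same convexity mechanism as Lemma \ref{lm: PLconvex} and \cite[(4.6)]{RasilaTalponen12}). You are also right that the delicate point is promoting a.e.\ positive parallelism of $\gamma_1'$ and $\gamma_2'$ to $\gamma_1\equiv\gamma_2$. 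However, the tool you propose for this --- affineness of $d(\cdot,\partial\Omega)$ along the cross-segments $[\gamma_1(t),\gamma_2(t)]$ --- is not the natural one and it is unclear it closes the argument. The clean way to finish is already in this paper, namely Lemma \ref{lm: exact}: your equality conditions give, for a.e.\ $t$, a common unit direction $u(t)$ with
\[
\frac{\gamma_1'(t)}{d(\gamma_1(t),\partial\Omega)}=k(x,y)\,u(t)=\frac{\gamma_2'(t)}{d(\gamma_2(t),\partial\Omega)},
\]
i.e.\ $\gamma_1'\,w(\gamma_1)=\gamma_2'\,w(\gamma_2)$ with $w=1/d(\cdot,\partial\Omega)$, which is Lipschitz on a neighbourhood of the traces since these are uniformly separated from $\partial\Omega$. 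The Gronwall-type estimate of Lemma \ref{lm: exact} (an ODE-uniqueness argument using only the starting condition $\gamma_1(0)=\gamma_2(0)$, not the terminal one) then forces $\gamma_1=\gamma_2$. So your argument is completable, and the concavity-equality observation turns out to be unnecessary; what each approach buys is clear: the citation is shorter, while your route plus Lemma \ref{lm: exact} makes the proposition self-contained within the paper.
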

\begin{proof}
Indeed, under the assumption the geodesics exist and are unique, see \cite{RasilaTalponen12}.
\end{proof}

\begin{lem}\label{lm: PLconvex}
The path length functional 
\[\ell_k \gamma =\int_\gamma \frac{\|\gamma' (t)\|}{d(\gamma(t),\partial\Omega)}\ dt\]
is convex. 
\end{lem}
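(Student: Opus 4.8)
The plan is to deduce convexity of $\ell_k$ on the path space from a pointwise convexity of its integrand. Fix admissible paths $\gamma_0,\gamma_1\colon[0,1]\to\Omega$ and $\lambda\in[0,1]$, and set $\gamma_\lambda=(1-\lambda)\gamma_0+\lambda\gamma_1$. Since $\Omega$ is convex, $\gamma_\lambda(t)\in\Omega$ for every $t$; moreover $\gamma_\lambda$ is again absolutely continuous with $\gamma_\lambda'=(1-\lambda)\gamma_0'+\lambda\gamma_1'$ a.e., so it is admissible and $\ell_k\gamma_\lambda$ is defined. It therefore suffices to prove the pointwise estimate
\[L(\gamma_\lambda(t),\gamma_\lambda'(t))\le(1-\lambda)L(\gamma_0(t),\gamma_0'(t))+\lambda L(\gamma_1(t),\gamma_1'(t))\quad\text{a.e.},\]
where $L(x,v)=\|v\|/d(x,\partial\Omega)$ is the Lagrangian; integrating this over $[0,1]$ and using linearity and monotonicity of the integral then yields $\ell_k\gamma_\lambda\le(1-\lambda)\ell_k\gamma_0+\lambda\ell_k\gamma_1$.

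There are two ingredients, each of which I would establish separately. For the numerator, convexity and positive homogeneity of the norm give $\|\gamma_\lambda'(t)\|\le(1-\lambda)\|\gamma_0'(t)\|+\lambda\|\gamma_1'(t)\|$ pointwise; this is the only use of the triangle inequality. For the denominator, the key geometric fact is that $x\mapsto d(x,\partial\Omega)$ is \emph{concave} on a convex domain. To prove this, fix $x_0,x_1\in\Omega$, put $r_i=d(x_i,\partial\Omega)$ so that $\U(x_i,r_i)\subset\Omega$, and let $z$ satisfy $\|z-x_\lambda\|<(1-\lambda)r_0+\lambda r_1$. Writing $z=(1-\lambda)(x_0+u_0)+\lambda(x_1+u_1)$ with $u_i=\frac{r_i}{(1-\lambda)r_0+\lambda r_1}(z-x_\lambda)$ one checks $\|u_i\|<r_i$, so $x_i+u_i\in\U(x_i,r_i)\subset\Omega$, and convexity of $\Omega$ forces $z\in\Omega$. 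Hence $\U(x_\lambda,(1-\lambda)r_0+\lambda r_1)\subset\Omega$, i.e.\ $d(x_\lambda,\partial\Omega)\ge(1-\lambda)d(x_0,\partial\Omega)+\lambda d(x_1,\partial\Omega)$. As a positive concave function, $d(\cdot,\partial\Omega)$ has a convex reciprocal, so the weight $x\mapsto1/d(x,\partial\Omega)$ is convex.

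The hard part, and the genuine content of the lemma, is to combine these ingredients: a convex numerator together with a convex positive weight does \emph{not} by itself force the quotient $L(x,v)=\|v\|/d(x,\partial\Omega)$ to be convex, since in general a quotient of a convex function by a concave one need not be convex. Writing $a_i=\|\gamma_i'(t)\|\ge0$ and $b_i=d(\gamma_i(t),\partial\Omega)>0$, the two estimates above reduce the required pointwise inequality to the scalar statement
\[\frac{(1-\lambda)a_0+\lambda a_1}{(1-\lambda)b_0+\lambda b_1}\le(1-\lambda)\frac{a_0}{b_0}+\lambda\frac{a_1}{b_1}.\]
This is the step I expect to be the main obstacle, since it compares two different weighted averages of the ratios $a_i/b_i$ and is sensitive to the interplay between the speeds $a_i$ and the boundary distances $b_i$. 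It holds immediately when $b_0=b_1$, where it collapses to the convexity of the norm, but in general the two one-sided estimates on numerator and denominator are not enough in isolation, and one must bring in further structure coupling the speeds to the boundary distances along the two paths — for instance via the constant-quasihyperbolic-speed normalisation fixed above, together with the reparametrisation invariance of $\ell_k$. Making this coupling precise, and thereby securing the scalar inequality under the stated hypotheses, is where the weight of the proof lies.
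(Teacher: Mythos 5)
Your reduction to a pointwise inequality for the integrand, together with the two auxiliary facts (subadditivity and homogeneity of the norm for the numerator, concavity of $x\mapsto d(x,\partial\Omega)$ on a convex domain for the denominator), is exactly the route the paper takes: its proof consists of quoting the pointwise bound \eqref{eq: *} from \cite[(4.6)]{RasilaTalponen12}. The gap is precisely the step you flag and leave open, and your suspicion about it is justified: the scalar inequality
\[
\frac{(1-\lambda)a_0+\lambda a_1}{(1-\lambda)b_0+\lambda b_1}\le(1-\lambda)\frac{a_0}{b_0}+\lambda\frac{a_1}{b_1}
\]
is false in general. Clearing denominators, the right-hand side minus the left-hand side equals
\[
\frac{\lambda(1-\lambda)\,(b_1-b_0)\,(a_0b_1-a_1b_0)}{\bigl((1-\lambda)b_0+\lambda b_1\bigr)\,b_0b_1},
\]
which is negative whenever $b_1>b_0$ and $a_0/b_0<a_1/b_1$; for instance $a_0=0$, $a_1=1$, $b_0=1$, $b_1=10$, $\lambda=\tfrac12$ gives $\tfrac1{11}>\tfrac1{20}$. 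Moreover, no additional coupling can close this gap in the generality in which the lemma is stated: in $\Omega=\UH$ take $\gamma_0(t)=(\eps t,1)$ and $\gamma_1(t)=(t,10)$ on $[0,1]$, both of constant quasihyperbolic speed; then $\ell_k\gamma_{1/2}=\tfrac{1+\eps}{11}$ exceeds $\tfrac12\bigl(\ell_k\gamma_0+\ell_k\gamma_1\bigr)=\tfrac{\eps}{2}+\tfrac1{20}$ for every $\eps<\tfrac1{10}$. So the functional is not convex on pairs of arbitrary (even constant-speed) paths, and the pointwise inequality \eqref{eq: *} fails for the same pair.

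What your computation does establish is the correct scope of the statement. If the two quasihyperbolic speeds agree almost everywhere, $a_0/b_0=a_1/b_1$, then your scalar inequality holds with equality and the pointwise bound follows from your two sub-lemmas; this covers two paths on $[0,1]$ parametrized proportionally to quasihyperbolic arclength and having the \emph{same} total quasihyperbolic length, which is the situation relevant to the convexity of quasihyperbolic balls. Without such a normalisation one only gets the mediant bound $\frac{(1-\lambda)a_0+\lambda a_1}{(1-\lambda)b_0+\lambda b_1}\le\max\{a_0/b_0,\,a_1/b_1\}$, i.e. quasiconvexity of the integrand. So keep your two sub-lemmas, which are correct and are the real content here, but the lemma needs either the equal-speed hypothesis or a weakening of ``convex'' to ``quasiconvex''; as stated, neither your argument nor the paper's citation proves it, because it is not true.
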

\begin{proof}
Take the point-wise weighted average of two rectifiable paths $\gamma_0$ and $\gamma_1$, 
$\gamma_s (t) := (1-s) \gamma_0 (t) + s \gamma_1 (t)$, $s\in [0,1]$. Then 
\[\ell_k \gamma_s \leq  (1-s) \ell_k \gamma_0 + s \ell_k \gamma_1 .\]
This follows from the inequality 
\begin{equation}\label{eq: *}
\frac{\|((1-s)\gamma_0 + s\gamma_1 )' \|}{d((1-s)\gamma_0 + s\gamma_1 ,\partial\Omega)} \leq 
\frac{(1-s)\|\gamma_{0}' \|}{d(\gamma_0 ,\partial\Omega)} +\frac{ s\|\gamma_{1}'\| }{d(\gamma_1 ,\partial\Omega)},
\end{equation}
see \cite[(4.6)]{RasilaTalponen12}.
\end{proof}

Suppose next that $\X$ is an {\it Asplund space}. This is a weaker condition than reflexivity and is dual to the RNP condition of a Banach space.
The Asplund property has the following equivalent formulation: Each continuous convex function $f\colon \X \to \R$ is Fr\'echet differentiable
in a generic set, i.e. a dense $G_\delta$ subset.

 \begin{thm}\label{thm: asplund}
 Suppose that $\X$ is an Asplund space and $\Omega\subset \X$ is a convex domain. Then the quasihyperbolic metric $k(x ,y)$ is Fr\'echet smooth in a generic subset of $\Omega\times \Omega$. If we fix one coordinate, 
 $k_{x_0}(y):=k(x_0 , y)$ is Fr\'echet smooth in a generic subset of $\Omega$.  
\end{thm}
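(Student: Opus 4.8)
The plan is to exploit the Asplund property together with the convexity established in Lemma~\ref{lm: PLconvex}. The key observation is that for each fixed pair of endpoints, the quasihyperbolic distance is realized as an infimum of a convex functional, and infima of families of convex functions tend to inherit convexity. I would first argue that the map $(x,y)\mapsto k(x,y)$ is itself convex on $\Omega\times\Omega$. To see this, take two pairs of endpoints $(x_0,y_0)$ and $(x_1,y_1)$, connect each by a path $\gamma_0$ and $\gamma_1$ whose quasihyperbolic length is within $\eps$ of the respective distances, and form the pointwise average $\gamma_s=(1-s)\gamma_0+s\gamma_1$. This average path connects the averaged endpoints $(1-s)x_0+sx_1$ and $(1-s)y_0+sy_1$, and by Lemma~\ref{lm: PLconvex} its length is at most the convex combination of the lengths of $\gamma_0$ and $\gamma_1$. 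Taking infima and letting $\eps\to 0$ yields
\[
k\bigl((1-s)x_0+sx_1,\ (1-s)y_0+sy_1\bigr)\le (1-s)\,k(x_0,y_0)+s\,k(x_1,y_1),
\]
so $k$ is convex as a function on the convex open set $\Omega\times\Omega\subset\X\times\X$.

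Once convexity is in hand, the second step is essentially an invocation of the defining property of Asplund spaces. I would note that $\X\times\X$ is again an Asplund space (finite products of Asplund spaces are Asplund), and that $k$ is continuous on $\Omega\times\Omega$ — continuity follows from the local comparability of the quasihyperbolic metric with the norm metric away from the boundary, as recorded at the end of the Preliminaries. A continuous convex function on an open convex subset of an Asplund space is Fr\'echet differentiable on a dense $G_\delta$ subset; this is exactly the equivalent formulation of the Asplund property quoted just before the theorem. Hence $k$ is Fr\'echet smooth on a generic subset of $\Omega\times\Omega$. For the second assertion, fixing $x_0$ gives the function $k_{x_0}(y)=k(x_0,y)$, which is a continuous convex function on $\Omega\subset\X$ by restricting the convexity inequality above to the slice, and the same Asplund differentiability theorem applies directly.

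The main technical obstacle I anticipate is the passage from the convexity of the \emph{length functional} $\ell_k$ (Lemma~\ref{lm: PLconvex}) to the convexity of the \emph{metric} $k$ itself. Two subtleties must be handled carefully here. First, the averaged path $\gamma_s$ need not be a geodesic even if $\gamma_0$ and $\gamma_1$ are; this is fine for the inequality direction we need, since $k$ of the averaged endpoints is bounded above by $\ell_k\gamma_s$, but it means the argument genuinely requires the infimum formulation rather than working directly with geodesics. Second, one must ensure the near-optimal paths $\gamma_0,\gamma_1$ can be chosen so that $\gamma_s$ remains inside $\Omega$ and is admissible: convexity of $\Omega$ guarantees $\gamma_s(t)\in\Omega$ pointwise, and the constant-speed reparametrization convention keeps the endpoints aligned. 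Apart from this, the only remaining care is to confirm the continuity of $k$ and the stability of the Asplund property under finite products, both of which are standard and can be cited. The heart of the proof is therefore the convexity reduction, after which the Asplund differentiability theorem does all the work.
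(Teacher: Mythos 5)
Your proposal is correct and follows essentially the same route as the paper: establish convexity of $k$ on $\Omega\times\Omega$ from Lemma~\ref{lm: PLconvex} (using near-optimal paths in the general case, exactly as the paper does via ``approximation with quasigeodesics''), note local Lipschitz continuity away from the boundary, and invoke the Asplund differentiability theorem. Your added observations --- that the averaged path need not be a geodesic but the inequality only needs the infimum formulation, and that $\X\times\X$ is again Asplund --- are correct refinements of steps the paper leaves implicit.
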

\begin{proof}
In the case with a strictly convex reflexive space the function 
\[k(x , y)=\ell_k \Lambda(x , y)\] 
is convex by the above Proposition \ref{prop: single} and Lemma \ref{lm: PLconvex}. 
Indeed, if $\gamma_0$ (resp. $\gamma_1$) is a geodesic connecting $x_0$ to $y_0$
(resp. $x_1$ to $y_1$), then 
\begin{multline*}
k((1-s)x_0 + sy_0 ,  (1-s)x_1 + sy_1 ) \leq \ell_k (\gamma_s ) \\
\leq (1-s)\ell_k (\gamma_0 ) + s \ell_k (\gamma_1 ) = (1-s)k(x_0, y_0 ) + s k(x_1 , y_1 ).
\end{multline*}
It is easy to see that $k$ is Lipschitz on domains uniformly separated from the boundary $\partial \Omega$. Thus Asplund property applies.

In the non-strictly convex, non-reflexive case we still have the convexity of $k$. The statement of the theorem can then be seen for example by approximation with quasigeodesics in place of $\Lambda(x , y)$. 

The latter part of the statement is seen in a similar fashion.
\end{proof}

\begin{rem}
The above statement also holds if Asplund condition is replaced by separability and Fr\'echet differentiability by G\^ateaux differentiability.
Indeed, it is know that separable Banach spaces are weak Asplund spaces (where the formulation runs analogously where G\^ateaux differentiability appears in place of Fr\'echet differentiability).
\end{rem}

\begin{thm}
If $\Omega$ is convex and the norm of $\X$ is uniformly convex, then the mapping $(\Lambda)' \colon \Omega^2 \to L^1 (\X)$,
$(x,y)\mapsto \gamma'$, is 
$\|\cdot\|_{\X\oplus\X}$-$\|\cdot\|_{L^1 (\X)}$-continuous.
\end{thm}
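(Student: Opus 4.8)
The plan is to argue in three stages: first reduce the statement to one about constant-speed geodesics with controlled derivatives, then establish the continuity of $\Lambda$ as a map into $C([0,1],\X)$, and finally exploit the uniform convexity of the norm to upgrade this to convergence of the derivatives in $L^1(\X)$. For the setup, recall that a uniformly convex norm is both reflexive and strictly convex, so Proposition \ref{prop: single} applies and $\Lambda$ is single-valued. Parametrizing the geodesic $\gamma=\Lambda(x,y)$ by constant quasihyperbolic speed gives $\|\gamma'(t)\|=k(x,y)\,d(\gamma(t),\partial\Omega)$ for a.e.\ $t$. Since a geodesic of bounded quasihyperbolic length stays in a bounded region uniformly separated from $\partial\Omega$ (by the equivalence of $k$ and the norm metric there), the factor $d(\gamma(t),\partial\Omega)$ lies between two positive constants, so $\|\gamma'(t)\|\in[m,M]$ with $0<m\le M<\infty$; these bounds may be chosen uniformly for endpoints in a fixed bounded set, so that along a convergent sequence $(x_n,y_n)\to(x,y)$ the derivatives $\gamma_n'=(\Lambda(x_n,y_n))'$ satisfy the same bounds for large $n$. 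The goal is then to prove $\gamma_n'\to\gamma'$ in $L^1(\X)$.

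The second stage is to show that $\Lambda$ is continuous into $C([0,1],\X)$, i.e.\ that $\gamma_n\to\gamma$ uniformly. The paths $\gamma_n$ are equi-Lipschitz and pointwise bounded, so by reflexivity I would extract a subsequence with $\gamma_n(t)\rightharpoonup\eta(t)$ weakly for every $t$, where $\eta$ joins $x$ and $y$. As $\ell_k$ is convex by Lemma \ref{lm: PLconvex} and lower semicontinuous, $\ell_k(\eta)\le\liminf\ell_k(\gamma_n)=\lim k(x_n,y_n)=k(x,y)$, using the continuity of $k$ on regions separated from $\partial\Omega$; hence $\eta$ is a geodesic and $\eta=\gamma$ by uniqueness. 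The weak convergence is then upgraded to uniform convergence through the Kadec--Klee property of the uniformly convex space $\X$ together with the convergence of lengths; alternatively, this uniform convergence is exactly the variational stability furnished by the ultrapower machinery of Section \ref{sect: ultra} (cf.\ Theorem \ref{thm: UF}). I expect this stage to be the main obstacle, since in infinite dimensions the family $\{\gamma_n\}$ has no norm compactness, and the passage from weak to strong convergence of the geodesics is precisely the delicate point that forces the uniform convexity hypothesis and the non-standard tools.

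For the final stage, after replacing $\gamma_n$ by a path with endpoints $x,y$ obtained by prepending the segment from $x$ to $x_n$ and appending the segment from $y_n$ to $y$ — a modification whose effect on $\gamma_n'$ in $L^1(\X)$ is $o(1)$ — I form the midpoint path $\mu_n=\tfrac12(\gamma_n+\gamma)$, which again joins $x$ to $y$. Convexity of $\ell_k$ gives $\ell_k(\mu_n)\le\tfrac12\ell_k(\gamma_n)+\tfrac12\ell_k(\gamma)$, while $\ell_k(\mu_n)\ge k(x,y)$; as both bounds converge to $k(x,y)$, the nonnegative defect in \eqref{eq: *} integrates to $0$ and hence tends to $0$ in $L^1[0,1]$. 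By the second stage the weights $d(\gamma_n(t),\partial\Omega)^{-1}$ and $d(\mu_n(t),\partial\Omega)^{-1}$ converge uniformly to $d(\gamma(t),\partial\Omega)^{-1}$ and are bounded above, so this forces the purely normwise defect $\tfrac12\|\gamma_n'\|+\tfrac12\|\gamma'\|-\|\tfrac12(\gamma_n'+\gamma')\|$ to tend to $0$ in $L^1$. Uniform convexity then supplies a modulus $\rho$, with $\rho(r)>0$ for $r>0$, such that $\tfrac12\|u\|+\tfrac12\|v\|-\|\tfrac12(u+v)\|\ge\rho(\|u-v\|)$ whenever $m\le\|u\|,\|v\|\le M$; applying this with $u=\gamma_n'(t)$ and $v=\gamma'(t)$ yields $\int_0^1\rho(\|\gamma_n'-\gamma'\|)\,dt\to0$, hence $\gamma_n'\to\gamma'$ in measure, and the uniform bound $\|\gamma_n'-\gamma'\|\le 2M$ upgrades this to convergence in $L^1(\X)$, which is the asserted continuity of $(\Lambda)'$.
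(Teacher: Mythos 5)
Your proposal is correct and follows essentially the same route as the paper's (sketched) proof: parametrize the geodesics by constant quasihyperbolic speed so the derivatives are bounded above and below away from zero, compare $\ell_k\bigl(\tfrac12(\gamma_n+\gamma)\bigr)$ with $\tfrac12\ell_k(\gamma_n)+\tfrac12\ell_k(\gamma)$ via \eqref{eq: *}, feed the vanishing convexity defect into the modulus of uniform convexity to get $\gamma_n'\to\gamma'$ in measure, and upgrade to $L^1$ convergence by essential boundedness. The only structural difference is your intermediate Stage 2 (uniform convergence of $\gamma_n$ to $\gamma$), which is the least solid part of your write-up but is also dispensable: the paper bypasses it by running the defect estimate as a direct contradiction with $\ell_k\bigl(\tfrac12(\gamma_n+\gamma)\bigr)\ge k\bigl(\tfrac{x_n+x}{2},\tfrac{y_n+y}{2}\bigr)\to k(x,y)$, which needs only a uniform upper bound on $d(\cdot,\partial\Omega)$ along the curves rather than convergence of the weights.
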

\begin{proof}[Sketch of proof]
Let $x_n \to x$, $y_n \to y$ in norm in $\Omega$. Then the convergences hold in the quasihyperbolic metric as well by the local bi-Lipschitz equivalence of the metrics.

According to the above observations there are unique geodesics $\gamma_n$ and $\gamma$ be geodesics connecting $x_n$ to $y_n$ and $x$ to $y$, 
respectively. Assume further that these are parameterized by constant quasihyperbolic path length growth. 
Analyze the quasihyperbolic  length of the averages $\frac{1}{2}(\gamma+\gamma_n)$. It turns out by using the modulus of convexity 
on \eqref{eq: *} that $\|\gamma' -\gamma_{n}' \|\to 0$ in measure as $n\to\infty$. Indeed, otherwise 
$\liminf_{n\to\infty} \ell_k (\frac{1}{2}(\gamma+\gamma_n))< \ell_k (\gamma)$ stating that 
\[
\liminf_{n\to\infty} k\Big(\frac{x+x_n}{2} ,\frac{y+ y_n}{2}\Big) < k(x,y),\]
which is impossible. Since these derivatives are essentially bounded according to the 
parametrization of the paths, we obtain 
\[
\int_{0}^1 \|(\gamma -\gamma_n )' (t) \|\ dt\to 0\text{ as }n\to\infty.
\] 
\end{proof}

\begin{thm}\label{thm: UF}
Let $\X$ be a uniformly smooth Banach space and let $\Omega\subset \X$ be a convex domain. Then the quasihyperbolic  balls are smooth in the sense that $k(x_0 , \cdot)$, $x_0 \in \Omega$, is continuously Fr\'echet differentiable away from $x_0$.
\end{thm}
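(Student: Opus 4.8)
The plan is to realize $f:=k(x_0,\cdot)$ as a continuous convex function and to verify the two conditions that together guarantee continuous Fréchet differentiability of a convex function on an open set: Gâteaux differentiability at every point $y\neq x_0$, and norm-to-norm continuity of the gradient map $y\mapsto\nabla f(y)$. Convexity of $f$ is already available, being the restriction to a fixed first coordinate of the jointly convex functional $k=\ell_k\circ\Lambda$ from the proof of Theorem~\ref{thm: asplund}, and $f$ is locally Lipschitz by the local bi-Lipschitz equivalence of $k$ with the norm metric away from $\bdr\Omega$. Uniform smoothness of $\X$ enters in two decisive ways: it forces $\X$ to be reflexive, so minimizing geodesics exist by the Martio--Väisälä argument, and it makes the norm uniformly Fréchet differentiable, so that the duality map $J:=\nabla\|\cdot\|$ (normalized by $\langle J(v),v\rangle=\|v\|$, $\|J(v)\|=1$) is single-valued and \emph{uniformly} norm-to-norm continuous on the unit sphere.

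First I would establish Gâteaux differentiability together with an explicit gradient by a first-variation argument sandwiched between convexity bounds. Fix $y\neq x_0$, let $\gamma\colon[0,1]\to\Omega$ be a minimizing geodesic from $x_0$ to $y$ parametrized by constant quasihyperbolic speed, and let $u=\gamma'(1^-)$ be its terminal velocity. Comparing $f(y+th)$ with the competitor obtained by perturbing $\gamma$ near its endpoint, the first variation of $\ell_k$ localizes to the boundary term
\[
\lim_{t\to 0^+}\frac{f(y+th)-f(y)}{t}\ \le\ \frac{1}{d(y,\bdr\Omega)}\,\langle J(u),h\rangle,
\]
the interior Euler--Lagrange contribution vanishing because $\gamma$ is a geodesic; here differentiability of the norm is what makes the boundary term linear in $h$. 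Applying the same bound in the direction $-h$ and using the elementary convexity inequality $f'(y;h)\ge -f'(y;-h)$ sandwiches the one-sided derivatives, giving
\[
f'(y;h)=\frac{1}{d(y,\bdr\Omega)}\,\langle J(u),h\rangle
\]
for \emph{every} terminal direction $u$ of \emph{every} minimizing geodesic. Since the left-hand side is independent of $u$, the terminal momentum $J(u)$ is in fact uniquely determined even when the geodesic itself is not (note that uniform smoothness does not deliver strict convexity, hence not geodesic uniqueness, so this automatic determination of $J(u)$ is essential), and $f$ is Gâteaux differentiable at $y$ with $\nabla f(y)=d(y,\bdr\Omega)^{-1}J(u)$.

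It then remains to prove that this gradient is norm continuous, which by the criterion above upgrades Gâteaux to continuous Fréchet differentiability. Let $y_n\to y$ in norm. The scalar factor converges since $d(\cdot,\bdr\Omega)$ is continuous, so the task reduces to $J(u_{y_n})\to J(u_y)$ in $\X^*$. Here I would invoke the variational stability of geodesics: as the endpoints converge the minimizing geodesics converge, and their velocity fields converge, which after controlling the behaviour at the terminal point yields $u_{y_n}\to u_y$; uniform smoothness then turns this into $J(u_{y_n})\to J(u_y)$ in norm via the uniform continuity of $J$. This is precisely the setting for the ultrapower and variational-stability machinery of Section~\ref{sect: ultra}, together with the $L^1$-continuity of $\Lambda'$, which is designed to upgrade closeness of geodesics after a quasihyperbolic state change to closeness in absolute terms.

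The main obstacle is exactly this control of the terminal tangent. Convergence of the velocity fields in an integral ($L^1$) sense does not by itself pin down the terminal direction $u_{y_n}$, so one must exploit that near the interior endpoint $y$ the density $d(\cdot,\bdr\Omega)^{-1}$ is nearly constant, forcing the geodesics to be nearly straight there and their terminal directions to vary stably; this is where the variational-stability tool does the real work. A secondary technical point to be discharged carefully is the first-variation boundary formula itself, that is, the regularity of the geodesic up to its endpoint and the vanishing of the interior term, which is what licenses localizing the derivative of $\ell_k$ to $d(y,\bdr\Omega)^{-1}J(u)$.
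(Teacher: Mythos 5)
Your overall strategy (convex function, Gâteaux differentiable with an explicit gradient, then norm-to-norm continuity of the gradient to upgrade to $C^1$) is legitimate in principle, and your gradient formula $\nabla f(y)=d(y,\bdr\Omega)^{-1}J(u)$ is the correct one in the model cases. But it is a genuinely different route from the paper's, and as written it has two gaps, the second of which is fatal. First, your first-variation formula presupposes that the terminal velocity $u=\gamma'(1^-)$ exists; a quasihyperbolic geodesic is only Lipschitz, hence differentiable a.e., and the existence of one-sided tangents at a prescribed point is a nontrivial matter (it is the subject of V\"ais\"al\"a's separate paper on tangential properties of quasihyperbolic geodesics) which you cannot simply assume. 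Second, and more seriously, the step you yourself flag as ``the main obstacle'' --- passing from $L^1$ (or pointwise) convergence of geodesics to convergence of their \emph{terminal} tangents $u_{y_n}\to u_y$ --- is not delivered by any of the machinery you invoke. Theorem~\ref{thm: var} concludes only $\|\gamma_n(t)-\lambda_n(t)\|\to 0$ pointwise, and the $L^1$-continuity of $\Lambda'$ controls $\int_0^1\|(\gamma-\gamma_n)'\|\,dt$, neither of which constrains the derivative at the single parameter value $t=1$; moreover both results are proved under uniform convexity (needed for uniqueness of geodesics and for the modulus-of-convexity argument), which uniform smoothness of the given norm does not supply, so even the phrase ``the minimizing geodesics converge'' is not well-posed without a selection. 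The heuristic that the weight is nearly constant near $y$ so the geodesics are ``nearly straight there'' is plausible but is essentially the whole difficulty restated, not a proof.

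The paper's proof avoids both issues by never computing the gradient at all. It shows directly that the symmetric second difference $k(x_0,y+h)+k(x_0,y-h)-2k(x_0,y)$ is $o(\|h\|)$ \emph{uniformly} in $y$ and $h$ (on annuli separated from $x_0$ and $\bdr\Omega$), which for a convex function is equivalent to uniform Fr\'echet differentiability and automatically yields uniform norm-to-norm continuity of the derivative. The mechanism is a two-scale perturbation: deform $\gamma$ only on a terminal parameter window of length $t=\sqrt{\|h\|}$ by $\pm s^2h/\|h\|$, so that the first-order terms $\pm\langle J(\gamma'),\eta'\rangle$ cancel when the two perturbed lengths are \emph{added}, leaving only the modulus-of-smoothness error $\mu_{\|\cdot\|}(2s)$ from the norm and a Lipschitz error from the weight, each of which integrates to $o(\|h\|)$. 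This is why no terminal tangent, no duality map along the path, no uniqueness of geodesics, and no stability of geodesics under endpoint perturbation are ever needed. If you want to salvage your approach, the honest fix is to prove the one-sided estimate $f(y\pm h)-f(y)\le \pm d(y,\bdr\Omega)^{-1}\langle J(u),h\rangle+o(\|h\|)$ by the same localized competitor and then observe that summing the two signs removes the dependence on $u$ entirely --- at which point you have reproduced the paper's argument.
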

\begin{proof}

Let $B$ be a quasihyperbolic  ball as above. Observe that there is a radius $\epsilon>0$ such that $\|\cdot\|$ and $k_0 (\cdot) := k(x_0 , \cdot)$ are bi-Lipschitz equivalent in 
\[(\partial B)^\epsilon = \{x\in \Omega\colon d(x,\partial B)<\epsilon\}.\]
This is due to the fact that 
on $B^\epsilon$ the weight $\frac{1}{d(x,\partial \Omega)}$ is bounded. 
This in turn is based on the observation that 
\[
\int_{0}^T \frac{dt}{L(t)} = \infty,
\]
for any $1$-Lipschitz mapping $L$ with $L(t)\to 0$ as $t\nearrow T$.
Here $T$ represents the norm length of a rectifiable path and $L$ represents $d(\gamma(t),\partial \Omega)$.

Let us verify that $k_0$ is indeed Fr\'echet differentiable away from the origin. Towards this, let 
$\gamma\colon [0,\ell]\to \Omega$ be a quasihyperbolic  geodesic joining two points $\gamma(0)=x_0 ,\gamma(\ell)\in B$
and parameterized by the \emph{norm} path length. By thinking of $B$ as a closure of incremental sequence of quasihyperbolic  balls 
it is easy to see that $\gamma$ is fully contained in $B$. 

We wish to show that
\begin{equation}\label{eq: sm0}
\frac{1}{\|h\|}(k(\gamma(0),\gamma(\ell)+h)+ k(\gamma(0),\gamma(\ell)-h)-2k(\gamma(0),\gamma(\ell))) \to 0,
\end{equation}
as $\|h\|\to 0$ uniformly, not depending on the particular endpoints or length $\ell$. However, we will assume that $\ell$ is uniformly bounded 
away from zero.

Fix $h$ such that $\gamma(\ell)\pm h \in \Omega$. Write $t=\sqrt{\|h\|}$.
Define new paths $\gamma_+$ and $\gamma_-$ as follows: on the segment $[0,\ell-t]$ they coincide with $\gamma$ and 
\[\gamma_\pm (\ell - t+s)=\gamma(\ell- t +s) \pm s^2 \frac{h}{\|h\|}\ \mathrm{for}\ 0\leq s \leq t.\]
Note that this definition is not sensible if $\ell$ is allowed to have values smaller than $t$..
The asymptotics of \eqref{eq: sm0} can be estimated by studying the sum of the lengths of $\gamma_\pm$. We are 
only required to study the behavior at parameter values on $[\ell-t,\ell]$.

Note that we may write $\mu_{\|\cdot\|}(\tau)=\tau \epsilon(\tau)$ where $\epsilon(\tau)\searrow 0$ as $\tau \to 0$. Write 
\[d^* = \sup_{y\in B^\epsilon} \frac{d}{dt}\frac{1}{t}\Big\vert_{t=d(y,\partial \Omega)}.\]

Recall that $\|\gamma' \|=1$ a.e., by the parameterization. Note that 
\[\|(\gamma_\pm (\ell -t + s) - \gamma (\ell -t + s))^\prime \|=\Big\|\pm\frac{d}{ds}s^2 \frac{h}{\|h\|}\Big\|=2s .\] 
Thus by using the definition of the modulus of smoothness we obtain that
\[\sum_\pm \|\gamma_\pm ^{\prime} (\ell -t + s)\| \leq 2 (1+\mu_{\|\cdot\|} (2s)) .\]
Also note that 
\[\frac{1}{d(\gamma_\pm (\ell -t + s), \partial \Omega)}\leq \frac{1}{d(\gamma (\ell -t + s), \partial \Omega)} + s^2 d^*\]
by the mean value principle. We obtain that
\begin{multline*}
k(\gamma(0),\gamma(\ell)+h)+ k(\gamma(0),\gamma(\ell)-h)-2k(\gamma(0),\gamma(\ell))\\
\leq \ell_{k}(\gamma_+)+\ell_{k}(\gamma_- ) - 2 \ell_{k}(\gamma)\\
= \int_{s=0}^{t} \frac{\|\gamma_+ ^{\prime} (\ell -t + s)\|}{d(\gamma_+ (\ell -t + s), \partial \Omega)}\ ds + \int_{s=0}^{t} \frac{\|\gamma_- ^{\prime} (\ell -t + s)\|}{d(\gamma_- (\ell -t + s), \partial \Omega)}\ ds - 2 \int_{0}^{t} \frac{ds}{d(\gamma (\ell -t + s), \partial \Omega)}\\
\leq \int_{s=0}^{t} 2 (1+\mu_{\|\cdot\|} (2s)) \left(\frac{1}{d(\gamma (\ell -t + s), \partial \Omega)} + s^2 d^*\right)ds - 2\int_{0}^{t} \frac{ds}{d(\gamma (\ell -t + s), \partial \Omega)}.
\end{multline*}
We claim that the bottom value converges to $0$ faster than $t^2 \to 0$.  
Since $s^2 \leq t^2 \to 0$ we are only required to investigate the term involving $\mu_{\|\cdot\|}$.

Note that since $y\mapsto \frac{1}{d(y,\partial \Omega)}$ is Lipschitz on $B$ (see the proof of Theorem \ref{thm: UF}), there is small enough $T$ (or $\|h\|$) such that 
\[1/d(\gamma (\ell -t + s), \partial \Omega) \leq 2 /d(\gamma (\ell), \partial \Omega)\ \mathrm{for}\ t\leq T .\]
Let us  estimate
\begin{multline*}
\frac{1}{t^2} \int_{0}^{t} 2\mu_{\|\cdot\|} (2s)\left(\frac{1}{d(\gamma (\ell -t + s), \partial \Omega)} + s^2 d^*\right) ds\\
\leq \frac{1}{t^2} \int_{0}^{t}4s \epsilon(2s)\left(\frac{2}{d(\gamma (\ell), \partial \Omega)} + s^2 d^*\right) ds\\
\leq \frac{1}{t^2} \Bigg\vert_{s=0}^{t} \epsilon(2t)\left(\frac{4s^2}{d(\gamma (\ell), \partial \Omega)} + s^4 d^*\right)\\
=\epsilon(2t)\left(\frac{4}{d(\gamma (\ell), \partial \Omega)}+t^2 d^*\right)\to 0,\quad t\to 0.
\end{multline*}

In fact, we observe by going through the calculations that if $p,r\in [1,2)$,  $r\leq (p+1)/2$, $r<3/2$  and 
$\mu_{\|\cdot\|} (\tau) = \tau^p \epsilon(\tau)$ where $\epsilon(\tau) \searrow 0$ as $\tau\to 0$, then we have 
\[\frac{1}{\|h\|^r}(k(x_0 ,x+h)+ k(x_0 ,x-h)-2k(x_0 ,x)) \to 0\]
as $\|h\|\to 0$ uniformly in any annulus whose distance to $x_0$ and $\partial \Omega$ is strictly positive.
In particular, for $p=r=1$ this states that uniform smoothness of the norm implies Fr\'echet differentiability 
of the function $k_0 (\cdot)$ in open annuli $(\partial B)^\eps$ as above, since the convergence is uniform in $\|h\|$. 
In fact, since the convergence is uniform both in $x$ and $\|h\|$ we have uniform Fr\'echet differentiability, see \cite[pp. 242, 289]{FA_book}.
This means that taking the Fr\'echet derivative $x \mapsto (k_0)' (x)$ is a uniformly continuous map 
$U \to \X^*$ for any open convex $U\subset \Omega$ uniformly separated from the $x_0$ and $\partial \Omega$.
\end{proof}

\begin{rem}
\label{gehring_rugby}
The question of smoothness of quasihyperbolic balls has been studied at least since 1979, when this question was posed by M. Vuorinen to F.W. Gehring. In an unpublished note \cite{gehringnote}, Gehring proposed that quasihyperbolic balls $\B_k(0,r)$, $r>0$ in the two-dimensional domain $D=\{z\in \R^3 : -1< z_2	 <1\}$ are not smooth. The above result shows that this is not a valid counterexample. See also \cite{Vuorinen2007}.
\end{rem}

The following result can be viewed as a kind of metric orthogonality between the quasihyperbolic  spheres and the 
quasihyperbolic  geodesics emanated from the quasihyperbolic  center of these spheres.

\begin{thm}
\label{orto}
Let $\Omega$ be a domain as above and $x_0 , x \in \Omega$. Suppose that $\gamma\colon [0,1]\to \Omega$ is a quasihyperbolic geodesic and 
$r=k(x_0 , x),\ \gamma(0)=x_0 ,\ \gamma(1)=x$. Then
\[\lim_{t\to 1^- } \frac{d_{\|\cdot\|} (\gamma(t),\S_{k}(x_0 , r))}{\|\gamma(t) - \gamma(1)\|}=1.\]
In the above limit one may also replace $\|x-y\|$ by $k(x,y)$. 
\end{thm}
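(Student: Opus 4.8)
The plan is to separate the statement into the two bounds $\limsup_{t\to1^-}\le1$ and $\liminf_{t\to1^-}\ge1$, the first being immediate and the second carrying all the content. Since $\gamma(1)=x$ itself lies on the sphere $\S_k(x_0,r)$, we have $d_{\|\cdot\|}(\gamma(t),\S_k(x_0,r))\le\|\gamma(t)-\gamma(1)\|$, so the ratio never exceeds $1$. The observation that drives the lower bound is that \emph{in the quasihyperbolic metric itself the endpoint $x$ is exactly the nearest point of the sphere to $\gamma(t)$}. Indeed, for any $w$ with $k(x_0,w)=r$ the triangle inequality gives $r=k(x_0,w)\le k(x_0,\gamma(t))+k(\gamma(t),w)$, while additivity of quasihyperbolic length along the geodesic gives $k(x_0,\gamma(t))+k(\gamma(t),\gamma(1))=r$; subtracting shows $k(\gamma(t),w)\ge k(\gamma(t),\gamma(1))$ for every $w\in\S_k(x_0,r)$. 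Consequently $\inf_{w\in\S_k(x_0,r)}k(\gamma(t),w)=k(\gamma(t),\gamma(1))$ \emph{exactly}, which already establishes the variant of the statement in which the norm distance is replaced by $k$ throughout.

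It then remains to transfer this exact quasihyperbolic orthogonality to the norm near the interior point $x$. Put $d_0:=d(x,\partial\Omega)>0$, fix $\rho\in(0,d_0)$, and set $\delta_t:=\|\gamma(t)-\gamma(1)\|\to0$. First I would localize the defining infimum: a point $w\in\S_k(x_0,r)$ with $\|w-x\|\ge\rho$ satisfies $\|\gamma(t)-w\|\ge\rho-\delta_t\ge\rho/2$ once $t$ is close to $1$, whereas $x$ itself gives norm distance $\delta_t<\rho/2$; hence $d_{\|\cdot\|}(\gamma(t),\S_k(x_0,r))$ is computed over $\S_k(x_0,r)\cap\B(x,\rho)$ alone.

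For such $w$, and for $t$ close to $1$ so that also $\gamma(t)\in\B(x,\rho)$, convexity of $\Omega$ keeps the segment $[\gamma(t),w]$ inside $\B(x,\rho)$, where $d(\cdot,\partial\Omega)\ge d_0-\rho$; the straight-segment estimate then yields $k(\gamma(t),w)\le\|\gamma(t)-w\|/(d_0-\rho)$. Combining this with the nearest-point inequality above and the standard lower bound $k(\gamma(t),\gamma(1))\ge\log\bigl(1+\delta_t/\min(d(\gamma(t),\partial\Omega),d_0)\bigr)\ge\log(1+\delta_t/d_0)$ (see \cite{Vuorinen88}) gives
\[\|\gamma(t)-w\|\ \ge\ (d_0-\rho)\,\log\!\Bigl(1+\frac{\delta_t}{d_0}\Bigr),\]
a bound uniform in $w\in\S_k(x_0,r)\cap\B(x,\rho)$, and therefore valid for $d_{\|\cdot\|}(\gamma(t),\S_k(x_0,r))$ itself. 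Dividing by $\delta_t$ and using $\log(1+\delta_t/d_0)/\delta_t\to1/d_0$ as $t\to1^-$ produces $\liminf_{t\to1^-}\ge(d_0-\rho)/d_0$; letting $\rho\to0$ gives $\liminf\ge1$, which together with the trivial upper bound finishes the proof.

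I expect the main obstacle to be the organization of this transfer step rather than any single hard estimate: one must make the two comparison constants match in the limit, namely the factor $1/(d_0-\rho)$ coming from the segment upper bound and the $1/d_0$ hidden in the logarithmic lower bound, so that they cancel to the clean value $1$. Relatedly, the localization must be handled purely through the uniform estimate above, since in infinite dimensions the infimum defining $d_{\|\cdot\|}(\gamma(t),\S_k(x_0,r))$ need not be attained. The local bi-Lipschitz equivalence of $k$ and $\|\cdot\|$ on sets uniformly separated from $\partial\Omega$, recalled in the preliminaries, is exactly what guarantees $d(\gamma(t),\partial\Omega)\to d_0$ and keeps all constants finite throughout.
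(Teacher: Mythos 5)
Your proof is correct, and its skeleton matches the paper's: the trivial bound $\limsup \le 1$ because $\gamma(1)\in\S_{k}(x_0,r)$, the localization of the competing sphere points to a small neighbourhood of $x$, and the transfer from a quasihyperbolic comparison to a norm comparison using the near-constancy of the weight near $x$. The difference lies in how that transfer is quantified. You make explicit the identity $\inf_{w\in\S_{k}(x_0,r)}k(\gamma(t),w)=k(\gamma(t),\gamma(1))$ (geodesic additivity plus the triangle inequality) and then sandwich it between the straight-segment upper bound $k(\gamma(t),w)\le\|\gamma(t)-w\|/(d_0-\rho)$ and the standard logarithmic lower bound $k(\gamma(t),\gamma(1))\ge\log(1+\delta_t/d_0)$, the constants $1/(d_0-\rho)$ and $1/d_0$ cancelling as $\rho\to0$. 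The paper instead argues for an arbitrary strictly positive continuous weight $w$, comparing $w$-lengths with norm-lengths through the oscillation bounds $\underline{w}_t\le w\le\overline{w}_t$ on a ball of radius $3\|\gamma(t)-\gamma(1)\|$ about $\gamma(1)$, with $\underline{w}_t/\overline{w}_t\to1$. Your route buys a completely explicit proof of the final claim that $\|\cdot\|$ may be replaced by $k$ (in the $k$-metric the ratio is identically $1$, which the paper leaves implicit), at the price of being tied to the specific weight $1/d(\cdot,\partial\Omega)$ through the logarithmic estimate; the paper's version is weight-generic, which is exactly what supports its remark that the orthogonality holds for any conformal inner metric with a continuous positive weight. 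Two small points: the segment $[\gamma(t),w]$ stays inside $\B_{\|\cdot\|}(x,\rho)\subset\Omega$ by convexity of the norm ball rather than of $\Omega$, so (like the paper's argument) yours does not actually need $\Omega$ convex; and in infinite dimensions the infimum over $\S_{k}(x_0,r)\cap\B_{\|\cdot\|}(x,\rho)$ need not be attained, but your uniform lower bound over that set handles this correctly.
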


The above statement holds similarly in a path-connected metric space with a continuous weight.
Surprisingly, the above orthogonality is mainly setting-free, it only depends on the conformality of the 
inner metric.  
 
\begin{proof}
We will prove the case with a general strictly positive continuous weight function $w$ in place of $1/d(x,\partial \Omega)$.  
%As a continuous image of a compact set, the curve $\gamma[0,1]$ is a compact set and thus $w$ attains its least value on the curve. It follows that the constant $w$-speed of $\gamma$ does not violate the norm-continuity of the path. 

We first observe that 
\[\limsup_{t\to 1^- } \frac{d_{\|\cdot\|} (\gamma(t),\S_{w}(x_0 , r))}{\|\gamma(t) - \gamma(1)\|}\leq 1.\]
Thus it suffices to check that 
\[\liminf_{t\to 1^- } \frac{d_{\|\cdot\|} (\gamma(t),\S_{w}(x_0 , r))}{\|\gamma(t) - \gamma(1)\|}\geq 1.\]

Let 
\[\underline{w}_t = \inf\{w(x)\colon \|x-\gamma(1)\|\leq 3\|\gamma(t)-\gamma(1)\|\}\]
and
\[\overline{w}_t = \sup\{w(x)\colon \|x-\gamma(1)\|\leq 3\|\gamma(t)-\gamma(1)\|\}.\]
Note that 
\[\lim_{t\to 1^-} \underline{w}_t=\lim_{t\to 1^-} \overline{w}_t = w(\gamma(1)).\]

Choose $t < 1$ such that 
\[\frac{\underline{w}_s}{\overline{w}_s}>\frac{1}{2},\quad t  < s < 1.\]
Suppose that $\lambda$ is a path starting from $\gamma(t)$. Suppose first that there exists $\lambda$ such that 
\begin{equation}\label{eq: set}
D=\big\{x\in \Omega\colon \|x-\gamma(1)\|\leq 3\|\gamma(t)-\gamma(1)\|\big\}.
\end{equation}
This means that 
\[\ell_w (\lambda) \geq 2 \underline{w}_{t}  \|\gamma(t)-\gamma(1)\|> \overline{w}_{t}  \|\gamma(t)-\gamma(1)\|\geq d_w (\gamma(t),\S_{w}(x_0 , r)).\]
It follows that $\ell_w (\lambda)> \ell_w (\gamma|_{[t , 1]})$. Therefore in studying $w$-short paths $\lambda$ between $\gamma(t)$ 
and the set $\S_{w}(x_0 , r)$ we may restrict to the case where the paths are included in the above set $D$ in \eqref{eq: set}.

Fix $y_{t}\in \S_{w}(x_0 , r) \cap D$. Suppose without loss of generality that $\lambda$ is a $w$-geodesic between $\gamma(t)$ and $y_{t}$. 
Indeed, if there does not exist a geodesic between the given points then we may pass on to  an approximating sequence of  paths $\lambda_n$ 
and the rest of the argument does not change considerably. 

Note that 
\[\ell_{\|\cdot\|}(\gamma|_{[t,1]}) \underline{w}_{t} \leq  \ell_{\|\cdot\|}(\lambda)  \overline{w}_{t}.\]
Thus 
\[\frac{ \underline{w}_{t}}{\overline{w}_{t}}\leq \frac{\ell_{\|\cdot\|}(\lambda)}{\ell_{\|\cdot\|}(\gamma|_{[t ,1]})}\]
where the left hand side tends to $1$ as $t \to 1$.

To estimate the right hand side we observe similarly as above that 
\[\|\gamma(t) - \gamma(1))\| \leq \ell_{\|\cdot\|}(\gamma|_{[t , 1]})\leq \frac{\overline{w}_{t}}{ \underline{w}_{t}}\|\gamma(t) - \gamma(1))\|\]
and an analogous statement holds for $\lambda$ also.
Thus we obtain
\[1\leq \liminf_{t \to 1^- } \frac{\ell_{\|\cdot\|}(\lambda)}{\ell_{\|\cdot\|}(\gamma|_{[t ,1]})}=\liminf_{t \to 1^-} \frac{\|\gamma(t) - y_{t}\|}{\|\gamma(t) - \gamma(1)\|}.\]
This yields the claim since the selections of the points $y_t$ were arbitrary.
\end{proof}

The following result says that a quasihyperbolic ball does not have cusps.

\begin{thm}
For a proper subdomain $\Omega$ of $\X$, radius $r>0$ and $y\in \partial \B_k(x,r)$, 
let $\gamma$ be a quasihyperbolic geodesic joining $x$ and $y$. For $z\in \gamma$ we have
\[
  \B_{\|\cdot\|}\left(z,\frac{|z-y|}{1+u}\right) \subset \B_{k}(x,r),
\]
where $u=|z-y|/d(z,\partial \Omega)$.
\end{thm}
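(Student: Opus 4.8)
The plan is to reduce the stated inclusion to a one-sided comparison of the two metrics centred at $z$, and then to close the gap with the two classical estimates for the quasihyperbolic metric, which are calibrated by the very choice of the radius.

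First I would exploit that $\gamma$ is a geodesic. Since $z$ lies on a quasihyperbolic geodesic from $x$ to $y$, the length splits additively, so that $k(x,z)+k(z,y)=k(x,y)=r$. Hence, for an arbitrary point $w$, the triangle inequality gives
\[
k(x,w)\le k(x,z)+k(z,w)=r-k(z,y)+k(z,w).
\]
Therefore it is enough to show that every $w$ with $|w-z|\le \tfrac{|z-y|}{1+u}$ obeys $k(z,w)\le k(z,y)$: then $k(x,w)\le r$, i.e. $w\in \B_k(x,r)$, which is exactly the desired inclusion $\B_{\|\cdot\|}\bigl(z,\tfrac{|z-y|}{1+u}\bigr)\subset \B_k(x,r)$.

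To prove $k(z,w)\le k(z,y)$ I would sandwich both quantities by $\log(1+u)$. Write $a=|z-y|$ and $d=d(z,\partial\Omega)$, so $u=a/d$ and the radius equals $\rho:=\tfrac{a}{1+u}=\tfrac{ad}{a+d}<d$. For the upper bound, the segment $[z,w]$ lies in $\Omega$ because $|w-z|\le\rho<d$, and since $d(\cdot,\partial\Omega)$ is $1$-Lipschitz we may use this segment as a test path:
\[
k(z,w)\le \int_0^{|w-z|}\frac{ds}{d-s}=\log\frac{d}{d-|w-z|}\le \log\frac{d}{d-\rho}=\log(1+u),
\]
the last equality being the identity $\tfrac{d}{d-\rho}=1+\tfrac{a}{d}$ that dictates the choice of $\rho$. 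For the lower bound I would invoke the standard estimate $k(z,y)\ge \log\bigl(1+\tfrac{|z-y|}{d(z,\partial\Omega)}\bigr)=\log(1+u)$; it follows by bounding, along any competing path $p$ from $z$ to $y$, the integrand from below via $d(p,\partial\Omega)\le d+|p-z|$ and integrating the resulting radial form. Combining the two displays yields $k(z,w)\le\log(1+u)\le k(z,y)$.

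The argument is almost entirely computational; the only points requiring care are that the two metric estimates hold verbatim in a general Banach space $\X$ (both rely only on the $1$-Lipschitz continuity of $x\mapsto d(x,\partial\Omega)$ and on convexity of norm balls, so nothing is lost) and that $\rho<d$, which guarantees $[z,w]\subset\Omega$. The conceptual heart of the proof --- and the step I would emphasise --- is the calibration of the radius: $\tfrac{|z-y|}{1+u}$ is chosen precisely so that the upper bound for $k(z,w)$ and the lower bound for $k(z,y)$ meet at the common value $\log(1+u)$. It is this matching that forces a norm ball of definite radius into $\B_k(x,r)$ at the boundary point reached along $\gamma$, thereby excluding a cusp.
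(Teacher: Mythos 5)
Your proposal is correct and follows essentially the same route as the paper: additivity of the geodesic plus the triangle inequality reduces the claim to comparing $k(z,w)$ with $k(z,y)$, and the two standard quasihyperbolic estimates $k(z,w)\le\log\frac{d}{d-|w-z|}$ and $k(z,y)\ge\log(1+|z-y|/d)$ close the gap. The only cosmetic difference is that you prove these two estimates inline and match them at the common value $\log(1+u)$, whereas the paper cites them and computes $(1-e^{-k(z,y)})\,d(z,\partial\Omega)\ge |z-y|/(1+u)$ directly.
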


The proof follows verbatim the proof of Theorem 2.8 in \cite{Klen08} and is reproduced here for the sake of convenience.

\begin{proof}
By the choice of $z$ we have
\[
  r = k(x,y) = k(x,z)+k(z,y)
\]
and by the triangle inequality for $w\in \B_k\big(z,k(z,y)\big)$ we have
\[
    k(x,w)\le k(x,z)+k(z,w) < r.
\]
Now
\[
  \B_k \big(z,k (z,y)\big) \subset \B_k (x,r).
\]
  
Next we will apply some known estimates involving the quasihyperbolic metric. These have been proven in the Euclidean setting, 
see \cite{Vuorinen88}, but since the proofs do not depend on the particular choice of the norm, we may use them in $\Omega$ as well.
  
We have
\[
    \B_{\|\cdot\|} \left( z, \left( 1-e^{-k(z,y)} \right) d(z,\partial \Omega) \right) \subset \B_k \big(z,k(z,y)\big),
\]
and thus
\[
  \B_{\|\cdot\|} \left( z, \left( 1-e^{-k(z,y)} \right) d(z,\partial \Omega) \right) \subset \B_k (x,r).
\]
It holds that 
$k(z,y)\ge \log( 1+|z-y|/d(z,\partial \Omega))$, and therefore
\begin{eqnarray*}
  \left( 1-e^{-k(z,y)} \right) d(z,\partial \Omega) & \ge & \left(1-\frac{d(z,\partial \Omega)}{d(z,\partial \Omega)+|z-y|}\right)d(z,\partial \Omega)\\
  & = & \frac{|z-y|}{1+u}
\end{eqnarray*}
for $u=|z-y|/d(z,\partial \Omega)$. Now
  \[
    \B_{\|\cdot\|}\left(z,\frac{|z-y|}{1+u}\right) \subset \B_{\|\cdot\|} \left( z, \left( 1-e^{-k(z,y)} \right) d(z,\partial \Omega) \right),
  \]
  and the claim follows.
\end{proof}

\section{Interlude: Variational principle via ultrapowers}\label{sect: ultra}
We will subsequently apply the following tool which can be seen as a kind of variational principle.

\begin{thm}\label{thm: var}
Let $\X$ be a superreflexive Banach space, let $w\colon \X  \to [a,\infty)$, $a>0$, be a weight function which is Lipschitz continuous 
and let $\gamma_n , \lambda_n \colon [0,1] \to \X$, $\gamma_n (0) = \lambda_n (0) =x_0\in \X$, be sequences of Lipschitz continuous paths such that 
\[\|\gamma_{n}' \| ,\ \|\lambda_{n}'\| \in [1/C , C]\]
holds a.e. and 
\begin{equation}\label{eq: unif}
\left\|\frac{\gamma_n (t+h) - \gamma_n (t-h)}{h} - 2\gamma_{n}' (t)\right\|, \left\|\frac{\lambda_n (t+h) - \lambda_n (t-h)}{h} - 2\lambda_{n}' (t)\right\|
\leq \beta(h),
\end{equation}
whenever sensible for a.e. $t\in [0,1]$, where $\beta(h)\searrow 0$ as $h\searrow 0$.
If 
\[\int_{0}^{1} \|\gamma_{n}' (t) w(\gamma_n (t)) - \lambda_{n}' (t) w(\lambda_n (t))\|\ dt \to 0,\quad n\to\infty\]
then 
\[\|\gamma_{n} (t) - \lambda_{n} (t) \|\to 0,\quad n\to\infty,\ t\in [0,1].\]
\end{thm}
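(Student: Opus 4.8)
The plan is to argue by contradiction and let the two sequences condense, inside the ultrapower $\X^\mathcal{U}$, into a single pair of genuinely differentiable paths whose weighted velocities coincide; a Gr\"onwall estimate then forces the paths to agree, contradicting the assumed separation. First I would suppose the conclusion fails, so that there are $t_0\in[0,1]$, $\delta>0$ and an infinite set $M\subset\N$ with $\|\gamma_n(t_0)-\lambda_n(t_0)\|\geq\delta$ for $n\in M$. Because $\int_0^1\|\gamma_n'w(\gamma_n)-\lambda_n'w(\lambda_n)\|\,dt\to0$, I can thin $M$ to a subsequence $(n_k)$ along which $f_{n_k}(t):=\|\gamma_{n_k}'(t)w(\gamma_{n_k}(t))-\lambda_{n_k}'(t)w(\lambda_{n_k}(t))\|\to0$ for a.e.\ $t$ (arrange $\int_0^1 f_{n_k}\le 2^{-k}$, so that $\sum_k f_{n_k}$ is integrable and hence finite a.e.). Reindexing and fixing a nonprincipal ultrafilter $\mathcal{U}$ on the index set, the speed bound $\|\gamma_n'\|,\|\lambda_n'\|\le C$ makes $(\gamma_{n_k}(t))_k$ and $(\lambda_{n_k}(t))_k$ bounded, so they represent $C$-Lipschitz paths $\Gamma,\Lambda\colon[0,1]\to\X^\mathcal{U}$, $\Gamma(t)=[(\gamma_{n_k}(t))_k]$, $\Lambda(t)=[(\lambda_{n_k}(t))_k]$, with $\Gamma(0)=\Lambda(0)=[(x_0)_k]$ and $\|\Gamma(t_0)-\Lambda(t_0)\|=\lim_\mathcal{U}\|\gamma_{n_k}(t_0)-\lambda_{n_k}(t_0)\|\geq\delta$.

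The crux is to make $\Gamma,\Lambda$ differentiable with derivative equal to the ultralimit of the derivatives, and to have the fundamental theorem of calculus at hand; here both remaining hypotheses are used. The uniform symmetric-difference bound \eqref{eq: unif} shows that $\tfrac{1}{2h}(\Gamma(t+h)-\Gamma(t-h))$ stays within $\beta(h)/2$ of $[(\gamma_{n_k}'(t))_k]$ for every $k$, so the symmetric derivative of $\Gamma$ exists and equals $\Gamma'(t):=[(\gamma_{n_k}'(t))_k]$ at a.e.\ $t$ (a co-null set suffices, being a countable intersection of the full-measure differentiability sets of the $\gamma_{n_k}$), and similarly for $\Lambda$. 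Superreflexivity of $\X$ makes $\X^\mathcal{U}$ reflexive, hence it has the RNP; the $C$-Lipschitz paths $\Gamma,\Lambda$ are therefore a.e.\ differentiable and recovered by Bochner integration of their derivatives, which agree a.e.\ with the symmetric derivatives just identified. Thus $\Gamma(t)-\Lambda(t)=\int_0^t(\Gamma'-\Lambda')\,d\tau$.

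It remains to transport the two data. Setting $W([(x_n)_n]):=\lim_\mathcal{U} w(x_n)$ gives a Lipschitz function on $\X^\mathcal{U}$ with $W\ge a$; since each $w(\gamma_{n_k}(t))$ is a scalar, a short estimate gives $\Gamma'(t)W(\Gamma(t))=[(\gamma_{n_k}'(t)w(\gamma_{n_k}(t)))_k]$, and the a.e.\ convergence $f_{n_k}(t)\to0$ then yields the pointwise identity $\Gamma'(t)W(\Gamma(t))=\Lambda'(t)W(\Lambda(t))=:V(t)$ for a.e.\ $t$. Writing $\Gamma'=V/W(\Gamma)$, $\Lambda'=V/W(\Lambda)$ and using $\|V\|\le CW_{\max}$ on the bounded set carrying the paths together with $|1/W(\Gamma)-1/W(\Lambda)|\le L\|\Gamma-\Lambda\|/a^2$, I obtain
\[
\|\Gamma(t)-\Lambda(t)\|\le \frac{CW_{\max}L}{a^2}\int_0^t\|\Gamma(\tau)-\Lambda(\tau)\|\,d\tau .
\]
Gr\"onwall's inequality with vanishing initial value forces $\Gamma\equiv\Lambda$, contradicting $\|\Gamma(t_0)-\Lambda(t_0)\|\ge\delta$.

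I expect the middle step to be the main obstacle: justifying rigorously that the ultrapower paths are differentiable with $\Gamma'=[(\gamma_{n_k}')]$ and that they are the integrals of their derivatives. This is precisely where the two structural assumptions earn their place. The bound \eqref{eq: unif} prevents the derivatives from oscillating away from the derivative of the limit path (in general $\gamma_{n_k}\to\Gamma$ does \emph{not} give $\gamma_{n_k}'\to\Gamma'$), while superreflexivity is invoked only to pass the RNP to $\X^\mathcal{U}$, without which a Lipschitz path need not equal the integral of its derivative. Once this is in place the remainder is a routine Gr\"onwall estimate that uses neither the lower speed bound $1/C$ nor any convexity of the norm.
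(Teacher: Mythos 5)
Your proposal is correct and follows essentially the same route as the paper: condense the two sequences into paths in the ultrapower $\X^\mathcal{U}$, use superreflexivity to get the RNP there, identify $\Gamma'=(\gamma_{n}')^{\mathcal U}$ via \eqref{eq: unif}, and conclude from $\Gamma'W(\Gamma)=\Lambda'W(\Lambda)$ that the limit paths coincide. The only deviation is the final uniqueness step, where you run a linear Gr\"onwall estimate using the lower bound $w\geq a>0$, while the paper isolates this step as Lemma \ref{lm: exact} and proves it by an Osgood-type divergent-integral argument valid for merely positive Lipschitz weights; both are valid here, and your contradiction-at-$t_0$ framing replaces the paper's subsequence-of-subsequence/$\liminf$ bookkeeping without loss.
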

 
\begin{lem}\label{lm: exact}
Let $\X$ be a Banach space with the RNP, $w\colon \X  \to (0,\infty)$ a weight function which is Lipschitz continuous 
and let $\gamma, \lambda \colon [0,1] \to \X$, $\gamma (0) = \lambda (0) =x_0$,  be Lipschitz paths such that for some $C>0$
\[\|\gamma' \| ,\ \|\lambda'\|  \in [1/C , C]\]
holds a.e. If 
\[\gamma' (t) w(\gamma (t)) = \lambda' (t) w(\lambda (t))\]
a.e. then $\gamma = \lambda$. 
\end{lem}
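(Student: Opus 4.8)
The plan is to run a Grönwall-type uniqueness argument directly on $[0,1]$, the key being that the common value
\[
v(t) := \gamma'(t)\,w(\gamma(t)) = \lambda'(t)\,w(\lambda(t))
\]
lets me express both velocities over a single vector and so reduce the comparison of $\gamma'$ and $\lambda'$ to a comparison of $1/w(\gamma)$ and $1/w(\lambda)$. First I would record the compactness input. Since $\gamma$ and $\lambda$ are continuous on $[0,1]$, the set $K := \gamma([0,1]) \cup \lambda([0,1])$ is compact, and because $w$ is continuous and strictly positive it attains a minimum $m>0$ and a maximum $M<\infty$ on $K$. As $w$ is Lipschitz (with constant $L$, say) and bounded below by $m$ on $K$, the reciprocal $1/w$ is Lipschitz on $K$, with $|1/w(x)-1/w(y)| \le (L/m^2)\|x-y\|$ for $x,y\in K$.

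Next I would rewrite the hypothesis. Since $w>0$, the identity above gives $\gamma'(t) = v(t)/w(\gamma(t))$ and $\lambda'(t) = v(t)/w(\lambda(t))$ for a.e.\ $t$, so the velocity difference factors cleanly as
\[
\gamma'(t) - \lambda'(t) = v(t)\Big(\tfrac{1}{w(\gamma(t))} - \tfrac{1}{w(\lambda(t))}\Big).
\]
Taking norms and using $\|v(t)\| = \|\gamma'(t)\|\,w(\gamma(t)) \le CM$ together with the Lipschitz bound on $1/w$, I obtain
\[
\|\gamma'(t) - \lambda'(t)\| \le \frac{CML}{m^2}\,\|\gamma(t) - \lambda(t)\|
\]
for a.e.\ $t\in[0,1]$. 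This is the crux of the proof: the pointwise speed bounds and the Lipschitz continuity of $w$ combine to give linear control of the velocity gap by the position gap.

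Then I would integrate. By the RNP hypothesis both Lipschitz paths are recovered from their a.e.\ derivatives by Bochner integration, and $\gamma(0)=\lambda(0)=x_0$, whence $\gamma(t)-\lambda(t) = \int_0^t (\gamma'-\lambda')\,d\tau$. Writing $g(t) := \|\gamma(t)-\lambda(t)\|$, the estimate above yields $g(t) \le (CML/m^2)\int_0^t g(\tau)\,d\tau$ with $g$ continuous and $g(0)=0$; Grönwall's inequality then forces $g\equiv 0$, i.e.\ $\gamma=\lambda$.

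I do not expect a serious obstacle here, as the scheme is a standard Carathéodory uniqueness argument. The genuinely delicate point is guaranteeing $m>0$, i.e.\ that $w$ does not degenerate along the paths: this is exactly where compactness of the images turns strict positivity into a \emph{uniform} lower bound, and it is also what upgrades $1/w$ from merely continuous to Lipschitz, which the factorization step requires. A secondary technical point is the legitimacy of the fundamental theorem of calculus for the $\X$-valued paths, which is precisely the content of the RNP assumption invoked in the integration step.
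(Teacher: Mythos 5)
Your argument is correct, and at its core it is the same kind of uniqueness proof as the paper's (close a differential inequality $\|\gamma'-\lambda'\|\lesssim\|\gamma-\lambda\|$ by a Gr\"onwall/Osgood argument, using RNP to recover the paths from their derivatives), but the derivation of the key inequality is genuinely different and, in fact, tidier. The paper first notes that $\gamma'$ and $\lambda'$ are a.e.\ proportional, passes to the largest initial segment on which the two paths coincide, and then estimates $\|\gamma'(t)-\lambda'(t)\|$ only for \emph{small} $t$ by comparing both $w(\gamma(t))$ and $w(\lambda(t))$ to $w(x_0)$ through the modulus of continuity $\nu_w$; it concludes with the Osgood divergence criterion $\int_{0}\frac{du}{\nu_w(u)}=\infty$. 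You instead factor the velocity gap through the common vector $v(t)=\gamma'(t)w(\gamma(t))=\lambda'(t)w(\lambda(t))$, writing $\gamma'-\lambda'=v\,\bigl(1/w(\gamma)-1/w(\lambda)\bigr)$, and you extract a uniform lower bound $m>0$ for $w$ on the compact set $\gamma([0,1])\cup\lambda([0,1])$. This makes the estimate $\|\gamma'-\lambda'\|\le (CML/m^2)\,\|\gamma-\lambda\|$ valid on all of $[0,1]$ at once, so the paper's ``restart at the last coincidence point'' reduction becomes unnecessary and a single global Gr\"onwall step finishes the proof. Your route also makes transparent that only local Lipschitzness of $w$ and its positivity on compact sets are really used; the paper's ratio estimate anchored at $x_0$ hides this. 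Both proofs invoke RNP at the identical spot, namely to justify $\gamma(t)-\lambda(t)=\int_0^t(\gamma'-\lambda')\,d\tau$.
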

\begin{proof}
We immediately observe that $\gamma'(t)$ and $\lambda' (t)$ are linearly dependent for a.e. $t$.

If the paths coincide in an initial segment $[0,s] \subset [0,1]$, we may disregard it and start both the paths in the largest coordinate 
$s$ such that the above coincidence holds. Now, assume to the contrary that $s<1$ and rename $\gamma (t) = \gamma_{|[s,1]}(s+t(1-s))$,
$\lambda (t) = \lambda_{|[s,1]}(s+t(1-s))$.

We denote by $\nu_w$ the modulus of continuity of $w$, so by the assumptions $\nu_w (t) \leq a t$ for some $a>0$.
Observe that 
\[\|\gamma' (t) - \lambda' (t)\|\leq 2C\left(\frac{w(x_0 )+ \nu_w (\|\gamma(t)-\lambda(t)\|)}{w(x_0 )- \nu_w (\|\gamma(t)-\lambda(t)\|)}-1\right)\]
for small values of $t$, and, asymptotically, the right-hand side is dominated by 
\[\frac{8C\nu_w (\|\gamma(t)-\lambda(t)\|) }{w(x_0 )}.\]

Taking into account 
\[\|\gamma (t) - \lambda (t)\| \leq \int_{0}^t \|\gamma' (s) - \lambda' (s)\|\ ds,\] 
and applying change of variable
\[
u(t)= \int_{0}^t \|\gamma' (s) - \lambda' (s)\|\ ds,\quad 
\frac{du}{dt}=\|\gamma' (t) - \lambda' (t)\|,
\] we get 
\[\frac{du}{dt} \leq \frac{8C\nu_w (u(t)) }{w(x_0 )},\]
\[\int_{0}^{u(T)} \frac{w(x_0 )\ du}{8C\nu_w (u)} \leq \int_{0}^T 1\ dt.\]

However, if $u(T)>0$ the left hand side inessential integral diverges, being  an integral of a function 
bounded below by $b/u$ where $b>0$ is a suitable constant. This case is of course impossible and contradicts the choice of the parameter $s$.
\end{proof}

\begin{proof}[Proof of Theorem \ref{thm: var}]
We assume all the conditions in the statement of the theorem.

In order to verify that 
\[(\gamma_{n} (t) - \lambda_{n} (t)) \to 0,\quad n\to\infty\]
for all $t$ it suffices to check that for each subsequence $(n_k)\subset \N$ one can find a further subsequence 
$(n_{k_j})$ such that 
\begin{equation}\label{eq: nkj}
(\gamma_{n_{k_j}} (t) - \lambda_{n_{k_j}} (t)) \to 0,\quad j\to\infty,\quad t\in[0,1],
\end{equation}
or equivalently 
\[\liminf_{k\to\infty} \|\gamma_{n_{k}} (t) - \lambda_{n_{k}} (t))\|=0,\quad t\in [0,1].\]

Fix $(n_k)\subset \N$. Now, observe that 
\[\int_{0}^{1} \|\gamma_{n_k}' (t) w(\gamma_{n_k} (t)) - \lambda_{n_k}' (t) w(\lambda_{n_k} (t))\|\ dt \to 0,\quad n\to\infty\]
implies that there is a subsequence $(n_{k_j})$ such that 
\begin{equation}\label{eq: wto}
\|\gamma_{n_{k_j}}' (t) w(\gamma_{n_{k_j}} (t)) - \lambda_{n_{k_j}}' (t) w(\lambda_{n_{k_j}} (t))\|\to 0
\end{equation}
for a.e. $t$. 

Let $\mathcal{U}$ be a non-principal ultrafilter over $\N$ such that
\begin{equation}\label{eq: nkjU} 
\{n_{k_j}\colon j\in\N\}\in \mathcal{U}.
\end{equation}
Then, to verify \eqref{eq: nkj} for some subsequence of $(n_{k_j})$ (which is sufficient), it is enough to show that 
\[\lim_{\mathcal{U}} \|\gamma_{n} (t) - \lambda_{n} (t)\| =0,\quad t\in [0,1].\]

Let $\X^\mathcal{U}$ be the ultrapower of $\X$. 
It is well known that the superreflexivity of $\X$ implies the reflexivity of $\X^\mathcal{U}$.
Note that in particular $\X^\mathcal{U}$ has the RNP.

If $(x_n) \in \ell^\infty (\X)$ is a representative of $x\in \X^\mathcal{U}$ we denote $x=(x_n)^\mathcal{U}$.
We define a weight function $w^\mathcal{U}$ on $\X^\mathcal{U}$ by 
\[w^\mathcal{U}((x_n)^\mathcal{U})=\lim_{\mathcal{U}} w(x_n).\]
This definition is proper due to uniform continuity of $w$.
It follows readily from the construction of this weight that it satisfies the same hypothesis as $w$ in the assumptions.

Let 
\[\gamma(t):=(\gamma_n (t))^\mathcal{U} , \lambda (t) := (\lambda_n (t))^\mathcal{U} \colon [0,1] \to \X^\mathcal{U}.\]
It follows readily that these paths are $C$-Lipschitz and start from the point $x_{0}^\mathcal{U} :=(x_0 , x_0 ,x_0 ,\ldots)^\mathcal{U}$.
Since $\X^\mathcal{U}$ has the RNP and $\gamma$ and $\lambda$ are Lipschitz we observe that 
these paths can be recovered by integrating their derivatives,
\[\gamma(t)=x_{0}^\mathcal{U} + \int_{0}^t \gamma' (s)\ ds,\]
\[\lambda(t)=x_{0}^\mathcal{U} + \int_{0}^t \lambda' (s)\ ds.\]

We claim that
$\gamma' (t)=(\gamma_{n}' (t))^\mathcal{U}$ and  $\lambda' (t)=(\lambda_{n}' (t))^\mathcal{U}$ for a.e. $t$.
Indeed, for $t$ and $h>0$ with $t\pm h \in [0,1]$ we have 
\[\frac{\gamma(t+h)- \gamma(t-h)}{h}=\left(\frac{\gamma_n (t+h)- \gamma_n (t-h)}{h}\right)^\mathcal{U}\]
where
\[\left\|\frac{\gamma_n (t+h)- \gamma_n (t-h)}{h} -2\gamma_{n}' (t)\right\|\leq \beta(h)\]
holds for a.e. $t$ and for \emph{any} $n$ according to \eqref{eq: unif}. Consequently,
\[\left\|\left(\frac{\gamma_n (t+h)- \gamma_n (t-h)}{h}\right)^\mathcal{U} - (\gamma_{n}' (t))^\mathcal{U}\right\|\leq \beta(h)\to 0\quad 
\mathrm{for\ a.e.}\ t\in [0,1], h\searrow 0 .\]
Similarly we see the case with $\lambda$.

Next, observe that 
\[\|\gamma' \| , \ \|\lambda'\|  \in [1/C , C]\]
holds a.e., since the vectors $\gamma_{n}'$ and $\lambda_{n}'$ satisfy the analogous 
statement for every $n$ separately. 

Note that 
\[\gamma' (t) w^\mathcal{U} (\gamma(t)) = (\gamma_{n}' (t) w(\gamma_{n} (t)))^\mathcal{U},\]
\[\lambda' (t) w^\mathcal{U} (\lambda(t)) = (\lambda_{n}' (t) w(\lambda_{n} (t)))^\mathcal{U}\]
for a.e. $t$. Indeed, here we applied the fact that if $\lim_{\mathcal{U}} a_n =a$, then 
$(a_n x_n )^\mathcal{U}= a (x_n )^\mathcal{U}$.

Thus \eqref{eq: wto} and \eqref{eq: nkjU} yield
\[\gamma' (t) w^\mathcal{U} (\gamma(t)) = \lambda' (t) w^\mathcal{U} (\lambda(t))\]
for a.e. $t$.
Now, the assumption of Lemma \ref{lm: exact} are fulfilled, so that $\gamma=\lambda$.

This can be rephrased as follows:
\[\lim_{\mathcal{U}} \|\gamma_n (t) - \lambda_n (t) \|=0,\quad t\in [0,1].\]
Taking into account  \eqref{eq: nkjU}, this means that 
\[\liminf_{k\to\infty} \|\gamma_{n_{k}} (t) - \lambda_{n_{k}} (t))\|=0,\quad t\in [0,1].\]
Since the selection of $(n_k)$ was arbitrary, this yields 
\[\lim_{n\to\infty} \|\gamma_{n} (t) - \lambda_{n} (t))\|=0,\quad t\in [0,1],\]
which completes the proof.
\end{proof}

\section{Quasihyperbolic metric as a renorming technique}\label{sect: renorm}

\begin{prop}\label{prop: equiv}
Let $\Omega \subset \X$ be domain which is a convex, bounded and symmetric (invariant under multiplication by $-1$). Then any quasihyperbolic ball $B=\B_k (0,r)$, $r>0$, defines an equivalent norm in $\X$ by the Minkowski functional
\[M(x) :=\inf\{\lambda> 0 \colon x \in \lambda B\} . \]
\end{prop}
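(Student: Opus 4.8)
The plan is to verify the four classical conditions on the set $B=\B_k(0,r)$ that together guarantee that its Minkowski functional is an equivalent norm, namely that $B$ is convex, balanced (symmetric), bounded, and a neighbourhood of the origin. Once these are in hand, the conclusion is the standard fact that the gauge of a bounded, convex, balanced, absorbing set is a norm whose unit ball is (the closure of) $B$, and that the two inclusions of norm balls translate into a two-sided comparison with $\|\cdot\|$. As a preliminary I would record that $0\in\Omega$: since $\Omega$ is symmetric, $x\in\Omega$ forces $-x\in\Omega$, and convexity then places $\tfrac{1}{2}(x+(-x))=0$ in $\Omega$; in particular $k(0,\cdot)$ is defined and $d(0,\partial\Omega)>0$.

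Convexity of $B$ amounts to convexity of the function $k(0,\cdot)$, which I would prove directly by the quasigeodesic approximation already used in the proof of Theorem \ref{thm: asplund} rather than through the geodesic selector $\Lambda$, so as to avoid any existence hypotheses. Given $y_0,y_1\in\Omega$ and $\eps>0$, choose near-minimizing paths $\gamma_0,\gamma_1$ from $0$ to $y_0,y_1$ with $\ell_k\gamma_i< k(0,y_i)+\eps$, and form the pointwise average $\gamma_s(t)=(1-s)\gamma_0(t)+s\gamma_1(t)$. Convexity of $\Omega$ keeps $\gamma_s$ inside $\Omega$, Lemma \ref{lm: PLconvex} gives $\ell_k\gamma_s\le(1-s)\ell_k\gamma_0+s\ell_k\gamma_1$, and since $\gamma_s$ joins $0$ to $(1-s)y_0+sy_1$ we obtain $k(0,(1-s)y_0+sy_1)\le(1-s)k(0,y_0)+sk(0,y_1)+\eps$; letting $\eps\to0$ shows $k(0,\cdot)$ is convex, hence its sublevel set $B$ is convex. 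Symmetry of $B$ follows from invariance of the data under $x\mapsto -x$: because $\partial\Omega=-\partial\Omega$ we have $d(-z,\partial\Omega)=d(z,\partial\Omega)$, so reflecting any admissible path $\gamma$ to $-\gamma$ preserves $\ell_k$, whence $k(0,-y)=k(0,y)$ and $-B=B$.

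It remains to extract the metric bounds. Since $B\subset\Omega$ and $\Omega$ is bounded, $B\subset\U_{\|\cdot\|}(0,R)$ for some $R>0$, and from $x\in\lambda B$ one reads off $\|x\|\le\lambda R$, giving $M(x)\ge\|x\|/R$. For the opposite direction I would invoke the local comparison of $k$ with the norm near the origin: on a small norm-ball about $0$ the weight $1/d(\cdot,\partial\Omega)$ is bounded, so estimating $k(0,y)$ along the segment $[0,y]$ yields $k(0,y)\le\|y\|/\delta$ with $\delta=\tfrac12 d(0,\partial\Omega)$ for small $\|y\|$; consequently $\U_{\|\cdot\|}(0,\rho)\subset B$ for a suitable $\rho>0$, which gives $M(x)\le\|x\|/\rho$. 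Balancedness and convexity make $M$ positively homogeneous, symmetric and subadditive, so $M$ is a norm, and the chain $\tfrac1R\|x\|\le M(x)\le\tfrac1\rho\|x\|$ exhibits it as equivalent to $\|\cdot\|$.

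I do not anticipate a genuine obstacle: every ingredient is either a property of the data or a consequence of earlier results (Lemma \ref{lm: PLconvex} together with the local bi-Lipschitz equivalence of $k$ and $\|\cdot\|$ recorded in the Preliminaries). The one point deserving care is the convexity of $B$ in a general Banach space, where quasihyperbolic geodesics need not exist; this is precisely why I would route the argument through quasigeodesic approximants and the convexity of $\ell_k$, so that the reasoning never relies on the single-valuedness of $\Lambda$ from Proposition \ref{prop: single}.
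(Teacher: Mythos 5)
Your proposal is correct and follows essentially the same route as the paper: verify that $B$ is open, bounded, symmetric and convex, then invoke the standard Minkowski-gauge fact. The only difference is that the paper cites \cite{RasilaTalponen12} for the convexity of $B$, whereas you derive it in-house from Lemma \ref{lm: PLconvex} via quasigeodesic approximants (the same device used in the proof of Theorem \ref{thm: asplund}), which is a legitimate and slightly more self-contained way to supply that ingredient.
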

\begin{proof}
It is easy to see that under the assumptions each centered open quasihyperbolic ball $B$ is a norm-open, bounded and symmetric subset. Moreover, it follows from the considerations in \cite{RasilaTalponen12} that $B$ is convex. It is well-known that in such a case the Minkowski functional $M(x)$ defines an equivalent norm in $X$. 
\end{proof}

The smoothness properties of this equivalent norm encode essentially all the relevant smoothness information involving the quasihyperbolic ball. Thus, one may also apply the well-matured Banach space theory to approach the problem of smoothness of the balls. Note that the above norm is different for different radii of the quasihyperbolic ball.

One of the problems in the theory of renormings of Banach spaces (see \cite{DGZ}) is the approximation of the norm of the space by other norms uniformly on bounded sets such that the approximating norm satisfy some nice properties. This task, of course, is sensible only if the space admits an equivalent norm with the given properties in the first place.
 
In most cases the non-separability of the Banach space somewhat complicates the constructions of the nice equivalent norms and in some 
cases it is not clear how the constructions extend from the separable to the non-separable setting (see e.g. \cite{HT}).
 
Above we showed how to induce a norm by a Minkowski functional using a centered quasihyperbolic balls on 
a convex, bounded and symmetric domain. Thus the domain $\Omega$ is essentially an open unit ball given by an equivalent norm $|||\cdot|||$, i.e. $\Omega = \U_{|||\cdot|||}$. 
However, it is easy to see that if we let the quasihyperbolic radius tend to infinity, then the corresponding quasihyperbolic sphere $\partial \B_k (0,r)$ converges to the sphere $\S_{|||\cdot|||}$ in the sense of symmetric Hausdorff distance (taken with respect to any of the equivalent metrics), see the proof of Theorem \ref{thm: UF}. 
This means that the norms $\|x\|_{(r)} := M_{\B_k (0,r)}(x)$ induced by the quasihyperbolic balls approximate $|||\cdot|||$ uniformly on bounded sets. 

This approach has two convenient features. First, as we have seen here, the approximation part comes for free, only thing that is left to do is to analyze the quasihyperbolic balls regarding the favorable property. Secondly, the analysis of the quasihyperbolic balls is usually not sensitive to whether or not the space is separable.
Also, the way the quasihyperbolic inner metric is defined appears to have a  mildly smoothening effect on the resulting norms, per se.

\begin{thm}
Suppose that $(\X,\|\cdot\|)$  is a uniformly smooth Banach space. Then  each equivalent norm can be approximated uniformly on bounded sets by uniformly smooth norms induced by the quasihyperbolic balls, 
similarly as in Proposition \ref{prop: equiv}.

Moreover, if the modulus of smoothness of the norm $\|\cdot\|$ 
has power type $1 < p \leq 2$, then the approximating norms have all the power types less than $(p+1)/2$. 
\end{thm}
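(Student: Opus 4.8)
The plan is to fix an arbitrary equivalent norm $|||\cdot|||$ on $\X$, take $\Omega=\U_{|||\cdot|||}$ to be its open unit ball (a convex, bounded, symmetric domain), and work with the family of norms $\|\cdot\|_{(r)}:=M_{\B_k(0,r)}$ supplied by Proposition \ref{prop: equiv}. The approximation part is essentially already recorded above: since $\partial\B_k(0,r)\to\S_{|||\cdot|||}$ in symmetric Hausdorff distance as $r\to\infty$ and all the metrics in sight are mutually equivalent, this forces $\|\cdot\|_{(r)}\to|||\cdot|||$ uniformly on bounded sets. Everything therefore reduces to two assertions about a single fixed ball: that $\|\cdot\|_{(r)}$ is uniformly smooth, and that its modulus of smoothness has power type $q$ for every $q<(p+1)/2$ once $\mu_{\|\cdot\|}$ has power type $p$.

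I would deduce both from the behaviour of $k_0:=k(0,\cdot)$ near the level set $\partial\B_k(0,r)=\{k_0=r\}$, which is exactly the unit sphere of $\|\cdot\|_{(r)}$. Fix $y$ with $k_0(y)=r$ and $h$ small, and let $\lambda_\pm=\|y\pm h\|_{(r)}$, so that $\lambda_\pm$ is determined by $k_0((y\pm h)/\lambda_\pm)=r$. Writing $\lambda_\pm=1+\delta_\pm$, the quantity governing the modulus of smoothness of $\|\cdot\|_{(r)}$ at $y$ is precisely $\tfrac12(\lambda_++\lambda_--2)=\tfrac12(\delta_++\delta_-)$. Expanding the defining relations along the rays through $y\pm h$ and using that the radial derivative $g_\pm:=(k_0)'(y\pm h)[y\pm h]$ is, by Theorem \ref{thm: UF}, continuous and, by monotonicity of $k_0$ along rays, bounded away from $0$ and $\infty$ uniformly on the relevant annulus, I would obtain
\[
\delta_\pm=\frac{k_0(y\pm h)-r}{g_\pm}+O(\delta_\pm^2).
\]
Adding these and writing $k_0(y\pm h)-r=\pm(k_0)'(y)[h]+R_\pm$ with $R_\pm\ge 0$ by convexity, the sum $\delta_++\delta_-$ splits into a symmetric part $R_+/g_++R_-/g_-$ and a cross part $(k_0)'(y)[h]\,(g_+^{-1}-g_-^{-1})$.

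The symmetric part is harmless: $R_++R_-$ is exactly the symmetric second difference $k_0(y+h)+k_0(y-h)-2k_0(y)$, which Theorem \ref{thm: UF} controls, being $o(\|h\|)$ in general and $o(\|h\|^{r'})$ for every $r'<(p+1)/2$ in the power-type case, while $g_\pm$ stays bounded below. The hard part will be the cross term, and this is where the real work lies: it is $O(\|h\|)$ times $|g_+-g_-|$, so I must show that $g_+-g_-$ is small of the right order. Decomposing $g_+-g_-=[(k_0)'(y+h)-(k_0)'(y-h)][y]+[(k_0)'(y+h)+(k_0)'(y-h)][h]$, this becomes a modulus-of-continuity estimate for the derivative $(k_0)'$. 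Here I would invoke the standard convex-analysis fact that a convex function whose symmetric second difference is $O(\|h\|^{r'})$, $1<r'<2$, has an $(r'-1)$-Hölder gradient; this gives $|g_+-g_-|=O(\|h\|^{r'-1})$ and hence a cross term of size $O(\|h\|^{r'})$. In the merely uniformly smooth case the same decomposition with uniform (rather than power-type) continuity of $(k_0)'$ yields $o(\|h\|)$. Since $r'<3/2<2$, the error $O(\delta_\pm^2)=O(\|h\|^2)$ is negligible against $\|h\|^{r'}$.

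Assembling the pieces gives $\delta_++\delta_-=o(\|h\|)$ in general and $\delta_++\delta_-=O(\|h\|^{r'})$ for each $r'<(p+1)/2$ in the power-type case, uniformly in $y$ on the sphere because every estimate borrowed from Theorem \ref{thm: UF} is uniform on annuli. Translating $\|h\|$ back into $\|h\|_{(r)}$ (equivalent norms, same power type), this shows $\mu_{\|\cdot\|_{(r)}}(\tau)=o(\tau)$, i.e.\ $\|\cdot\|_{(r)}$ is uniformly smooth, and, choosing $r'$ between $q$ and $(p+1)/2$, that $\mu_{\|\cdot\|_{(r)}}$ has power type $q$ for every $q<(p+1)/2$, as claimed.
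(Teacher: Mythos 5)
Your argument is correct in outline and reaches the same endpoint as the paper --- both proofs ultimately reduce the modulus of smoothness of $M_{\B_k(0,r)}$ to the uniform second-difference estimate for $k_0$ established in Theorem \ref{thm: UF} --- but the reduction itself is genuinely different. The paper sidesteps your ``cross term'' entirely: it asserts (by ``geometric considerations'') that the modulus of smoothness may be computed over the restricted set where $|||y+h|||_n, |||y-h|||_n \geq 1$, and then compares one-sidedly, using $|||y\pm h|||_n - 1 \le C\, d_{\|\cdot\|}(y\pm h,\partial B_n)$ against $k_0(y\pm h)-k_0(y) \ge d_{\|\cdot\|}(y\pm h,\partial B_n)\inf_{\partial B_n} 1/d(\cdot,\partial\Omega)$, which dominates $C$ once $n$ is large. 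You instead expand $\|y\pm h\|_{(r)}$ by an implicit-function argument along rays and confront the antisymmetric part head-on, controlling $g_+-g_-$ via the fact that a convex function with second difference $O(\|h\|^{r'})$ has an $(r'-1)$-H\"older gradient. What your route buys: it needs no restriction of the modulus (the paper's restriction is asserted, not proved, and is the one genuinely delicate point of its argument) and it works for every fixed radius $r$, not only asymptotically in $r$. What it costs: you import the convex-analysis equivalence between power-type second differences and H\"older gradients (standard, but it must be applied on a full annulus, which Theorem \ref{thm: UF} does supply), and your $O(\delta_\pm^2)$ error in the radial expansion is not free --- it requires a modulus of continuity for the radial derivative $t\mapsto \frac{d}{dt}k_0(t(y\pm h))$, which you should note comes from the same H\"older-gradient fact (yielding $O(\delta_\pm^{r'})=O(\|h\|^{r'})$ rather than $O(\|h\|^2)$ in general, which still suffices since you only claim power type $q$ for each $q<(p+1)/2$). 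With those two points made explicit, your proof is complete and arguably tighter than the paper's sketch.
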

\begin{proof}[Sketch of proof] Let $B_n = \B_k (0,n)$. By the proof of Theorem \ref{thm: UF} we know that $k$ is continuously Fr\'echet differentiable on domains $(\partial  B_n)^\eps$ with suitable $\eps>0$. 

Now, since $\partial B_n$ is the level set of a continuously Fr\'echet differentiable function, 
a standard argument employing the Implicit Function Theorem gives that the norm \mbox{$|||\cdot |||_n$}$:=M_{B_n} (\cdot)$ is also continuously Fr\'echet differentiable, see 
\cite[pp. 163--168]{DGZ}.   

In fact, in estimating from above the modulus of smoothness of the $M_{B_n} (\cdot)$ norms, we may restrict by geometric considerations 
to a version of the modulus as follows:
\[\mu_n (\tau)=\sup\left\{\frac{|||y+h|||_n +|||y-h|||_n }{2} -1\colon |||y+h|||_n , |||y-h|||_n \geq |||y|||_n =1,\ \|h\|=\tau\right\}.\]
Here 
$|||y\pm h||| \leq 1 + C d_{\|\cdot\|}(y\pm h,\partial B_n)$ where $C$ is the isomorphism constant involving the equivalent norms $\|\cdot\|$ and
$|||\cdot|||$. This constant depends on the particular norm $|||\cdot|||_n =M_{B_n} (\cdot)$ but it may be replaced by one universal to all the norms $M_{B_n} (\cdot)$, $n\in \N$.

Suppose that $|||y\pm h||| \geq 1$. Then 
\[k_0 (y\pm h) \geq k_0 (y) + d_{\|\cdot\|} (y\pm h , \partial B_n ) \inf \{1/d(x , \partial \Omega) \colon x\in  \partial B_n\} \]
by the definition of the $k$ metric. Thus, for large $n$ the above infimum becomes larger than $C$, so that asymptotically
\begin{multline*}
\sup\left\{\frac{|||y+h|||_n +|||y-h|||_n }{2} -1\colon |||y+h|||_n , |||y-h|||_n \geq |||y|||_n =1,\ \|h\|=\tau\right\}\\
\leq \sup\left\{\frac{k_0 (y+h) +k_0 (y-h)}{2} -k_0 (y)\colon k_0 (y+h) , k_0 (y-h) \geq k_0 (y),\ \|h\|=\tau\right\}
\end{multline*}
holds. The latter quantity is controlled in the proof of Theorem \ref{thm: UF}.
\end{proof}

Recall that the modulus of convexity $\delta$ need not be a convex function. However, it has a greatest convex minorant and we denote this 
by $\hat{\delta}$.

\begin{thm}\label{thm: unifconv}
Suppose that $\X$ is a uniformly convex Banach space, $\Omega \subset \X$ is a symmetric convex domain and 
$B=B_k (0,r)$ for some $r>0$. Then the equivalent norm $|||\cdot |||= M_B (\cdot)$ on $\X$ is uniformly convex.

Moreover, any equivalent norm on $\X$ can be approximated uniformly on bounded sets by uniformly convex norms 
arising from such quasihyperbolic balls $B$.
\end{thm}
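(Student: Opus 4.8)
The plan is to establish the uniform convexity of the single norm $|||\cdot|||=M_B(\cdot)$ first and then to read off the approximation statement. For the ``moreover'' part, given an arbitrary equivalent norm on $\X$ I would take $\Omega$ to be its open unit ball, a symmetric convex domain, so that the first assertion applies and yields for every $r>0$ a uniformly convex norm $\|\cdot\|_{(r)}:=M_{\B_k(0,r)}(\cdot)$. As recorded in the discussion preceding this theorem (and in the proof of Theorem \ref{thm: UF}), the spheres $\partial\B_k(0,r)$ converge to the sphere of that norm in symmetric Hausdorff distance as $r\to\infty$, whence $\|\cdot\|_{(r)}$ converges to it uniformly on bounded sets. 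Thus the entire content lies in the first assertion.

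For the first assertion I would argue by contradiction using the variational principle of Theorem \ref{thm: var}. Since $\X$ is uniformly convex it is superreflexive and strictly convex, so by Proposition \ref{prop: single} the geodesics between points of $\Omega$ exist and are unique, and Theorem \ref{thm: var} is available. Assume $|||\cdot|||$ is not uniformly convex: there are $\eps>0$ and $x_n,y_n\in\partial B=\S_k(0,r)$ with $|||x_n-y_n|||\geq\eps$ while $|||\tfrac12(x_n+y_n)|||\to1$. Let $\gamma_n^0,\gamma_n^1\colon[0,1]\to\Omega$ be the geodesics joining $0$ to $x_n$ and to $y_n$, parametrized at constant quasihyperbolic speed $r$. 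Because $B$ is bounded and uniformly separated from $\partial\Omega$, the weight $w=1/d(\cdot,\partial\Omega)$ (extended to a globally Lipschitz, bounded-below function agreeing with $1/d(\cdot,\partial\Omega)$ near $B$) is Lipschitz and bounded above and below on a neighbourhood of $B$; hence the norm speeds $\|(\gamma_n^i)'\|$ lie in a fixed interval $[1/C,C]$ and the geodesics satisfy the uniform regularity \eqref{eq: unif}.

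The averaged paths $\tfrac12(\gamma_n^0+\gamma_n^1)$ join $0$ to $\tfrac12(x_n+y_n)$ and, by convexity of the length functional (Lemma \ref{lm: PLconvex}), have quasihyperbolic length at most $r$; on the other hand this length is at least $k(0,\tfrac12(x_n+y_n))\to r$. Hence the integrated convexity defect in \eqref{eq: *} tends to $0$. The decisive step is to bound this defect from below by the modulus of convexity of $\X$: writing $p_n=(\gamma_n^0)'w(\gamma_n^0)$ and $q_n=(\gamma_n^1)'w(\gamma_n^1)$ for the quasihyperbolic velocities, which have norm $r$ a.e., the defect dominates (up to the bounded weight factors) an integral of $\hat\delta(\|p_n-q_n\|/r)$, the convex minorant $\hat\delta$ making Jensen's inequality applicable. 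Thus the defect $\to0$ forces $\|p_n-q_n\|\to0$ in measure, and boundedness upgrades this to
\[\int_0^1\big\|(\gamma_n^0)'(t)\,w(\gamma_n^0(t))-(\gamma_n^1)'(t)\,w(\gamma_n^1(t))\big\|\,dt\to0.\]
This is precisely the hypothesis of Theorem \ref{thm: var}; applied with $\gamma_n=\gamma_n^0$, $\lambda_n=\gamma_n^1$ it yields $\|\gamma_n^0(t)-\gamma_n^1(t)\|\to0$ for all $t$, in particular $\|x_n-y_n\|\to0$, contradicting $|||x_n-y_n|||\geq\eps$ by equivalence of the norms.

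I expect the main obstacle to be the quantitative modulus-of-convexity estimate: extracting, from the vanishing of the integrated defect, a lower bound in terms of $\|p_n-q_n\|$ that is uniform in $n$ and robust against the interplay between the numerator $\|\cdot\|$ and the concave denominator $d(\cdot,\partial\Omega)$ in \eqref{eq: *} (the mismatch between the distance at the midpoint and the average of the distances must be absorbed). It is worth stressing that Theorem \ref{thm: var} is genuinely needed here: because $w$ depends on position, $\int\|p_n-q_n\|\to0$ does \emph{not} by itself give $\|x_n-y_n\|\to0$, and it is exactly this position–velocity coupling that the ultrapower argument and the rigidity of Lemma \ref{lm: exact} resolve. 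A secondary technical point is the verification of \eqref{eq: unif} with a modulus $\beta$ independent of $n$, which should follow from the local bi-Lipschitz equivalence of $k$ and $\|\cdot\|$ together with the $C^1$-regularity established in Theorem \ref{thm: UF}.
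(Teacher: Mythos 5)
Your proposal follows essentially the same route as the paper's proof: reduce uniform convexity to showing $\|x_n-y_n\|\to 0$, use the convexity of the quasihyperbolic length functional together with the (convex minorant of the) modulus of convexity and Jensen's inequality to force the quasihyperbolic velocities $(\gamma_n^i)'w(\gamma_n^i)$ to converge in $L^1$, and then invoke the ultrapower variational principle of Theorem \ref{thm: var} (which the paper's text miscites as Theorem \ref{thm: UF}) to upgrade this to convergence of the endpoints; the ``moreover'' part is handled identically via Hausdorff convergence of the spheres $\partial\B_k(0,r)$. The technical points you flag (the position--velocity coupling resolved by Lemma \ref{lm: exact}, and the verification of \eqref{eq: unif} with a uniform modulus $\beta$) are exactly the ones the paper addresses, the latter by appeal to the proof of Theorem 3.1 in \cite{RasilaTalponen14}.
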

\begin{proof}
Fix $R>0$. Let $B=\B_k (0,R)$ and $|||\cdot|||=M_B (\cdot)$.
Note that the formal identity mappings $(B,k) \to (B, \|\cdot\|)$ and $(B,k) \to (B, |||\cdot |||)$ are bilipschitz due to the fact that 
on $B$ the weight $\frac{1}{d(x,\partial \Omega)}$ is bounded, see the proof of Theorem \ref{thm: UF}.

Let $x_n ,y_n \in B$ with $k(0,x_n )=k(0,y_n)=R$ and $k(0,(x_n + y_n)/2)\to R$. In order to prove the uniform convexity of 
$|||\cdot|||$ it suffices to prove that $\|x_n -y_n\|\to 0$ by the equivalence of the norms.
There exists by the uniform convexity of $\X$ and the convexity of the domain 
unique geodesics $\gamma_n$ and $\lambda_n$ from $0$ to $x_n$ and $y_n$, respectively.

Without loss of generality we may assume that $\gamma_n$ and $\lambda_n$ are parametrized by the quasihyperbolic length. 
Then
\[k(0,x_n)=\int_{0}^{R} \frac{\|\gamma_{n}'(t)\|}{d(\gamma_n (t), \partial \Omega)}\ dt = k(0,y_n )=\int_{0}^{R} \frac{\|\lambda_{n}'(t)\|}{d(\lambda_n (t), \partial \Omega)}\ dt =R\]
where
\[v_n: (t) = \frac{\gamma_{n}'(t) }{d(\gamma_n (t), \partial \Omega)}\ \mathrm{and}\ w_n (t) := \frac{\lambda_{n}'(t)}{d(\lambda_n (t), \partial \Omega)} \] 
are norm-$1$ for a.e. $t\in [0,R ]$. Also,
\[\|x_n -y_n \|=\|\gamma_n (R) - \lambda_n (R)\| \leq \int_{0}^R \|(\gamma_n - \lambda_n )' \|\ dt.\]

Note that 
\[k\left(0,\frac{x_n + y_n }{2}\right) \leq \int_{0}^{R} \frac{\frac{1}{2}\|(\gamma_n (t)+\lambda_n (t))'\|}{ d(\frac{1}{2}(\gamma_n (t) +\lambda_n (t)) ,\partial\Omega)}\ dt \leq \int_{0}^R \frac{\|v_n (t) + w_n (t)\|}{2}\ dt, \]
where we use the convexity of the domain, see Lemma \ref{lm: PLconvex}. 
Put 
\[\mathrm{Aver}_n =\frac{1}{R}\int_{0}^{R} \|v_n (t) - w_n (t)\|\ dt .\]

Observe that
\begin{multline*}
 \int_{0}^{R} \frac{\|v_n (t)+ w_n (t)\|}{2}\ dt   \leq  \int_{0}^{R} 1- \hat{\delta}_{\|\cdot\|}(\|v_n (t)- w_n (t)\|)\ dt \\
 \leq  \int_{0}^{R} 1- \hat{\delta}_{\|\cdot\|}(\mathrm{Aver}_n )\ dt =  R - R\hat{\delta}_{\|\cdot\|}(\mathrm{Aver}_n ) .
\end{multline*}

The conclusion from this is that if $k(0,(x_n + y_n)/2)\to R$ as $n\to\infty$, then $\mathrm{Aver}_n \to 0$ as $n\to\infty$.
We will complete the proof by applying the variational principle of Theorem \ref{thm: UF}.
Then it follows that $x_n = \gamma_n (R)$, $y_n = \lambda_n (R)$ satisfy $\|x_n -y_n \|\to 0$ which then implies the 
uniform convexity of  $|||\cdot |||$.

Finally, we will check that the assumptions of Theorem \ref{thm: UF} hold.
Since $\X$ is uniformly convex it is in particular superreflexive.
Observe that the weight $w(x)=1/d(x,\partial \Omega)$ is Lipschitz and bounded below 
by
\[\frac{1}{\sup_{x\in B} d(x,\partial\Omega)}>0\text{ in }B. 
\]

Since the paths are parametrized by the quasihyperbolic length, we can choose $C$ appearing in the assumptions to be 
$C=\max\{\sup_{x\in B} d(x,\partial\Omega),1/\inf_{x\in B} d(x,\partial\Omega)\}$.
Since $\mathrm{Aver}_n \to 0$ we get the `if' part of the Theorem.

To obtain \eqref{eq: unif} we apply the proof of Theorem 3.1 in \cite{RasilaTalponen14} with a modification. 
The condition \eqref{eq: unif} is obtained by using the triangle inequality 
on \cite[$(3.2)$]{RasilaTalponen14} (sic). 

Here $\X$ is a uniformly convex space and $w$ is Lipschitz continuous with constant 
\[
\sup_{x\in B} \frac{d}{dt} \frac{1}{t}\bigg|_{t=d(x,\partial \Omega)}.
\] 
We will relax the assumption about the power type of the modulus of convexity as follows.
The largest convex minorant $\hat{\delta}_\X$ of $\delta_\X$ is strictly increasing (see e.g. \cite[p. 85]{SC}), 
therefore $\hat{\delta}_{\X}^{-1}$ exists. 
One can use the definition $\beta(h)=\hat{\delta}_{\X}^{-1}(\mu(h))$ to get \cite[$(3.2)$]{RasilaTalponen14}. This convention 
will not be sufficient to obtain \cite[$(3.3)$]{RasilaTalponen14}, a priori, but that is not required for our purposes.

This concludes the proof.
\end{proof}

\begin{prop}
Suppose that $\X$ is a strictly convex Banach space with the RNP. Then each equivalent norm can be approximated uniformly on bounded sets 
by strictly convex norms induced by the quasihyperbolic balls, similarly as above.
\end{prop}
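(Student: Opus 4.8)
The plan is to run the scheme of Theorem \ref{thm: unifconv} with \emph{strict} convexity in place of uniform convexity. Given an equivalent norm $|||\cdot|||$, I would realise it as $\Omega = \U_{|||\cdot|||}$, set $B_r = \B_k (0,r)$ and $|||\cdot|||_r := M_{B_r}(\cdot)$. By Proposition \ref{prop: equiv} each $|||\cdot|||_r$ is an equivalent norm, and by the Hausdorff convergence $\partial B_r \to \S_{|||\cdot|||}$ discussed in the proof of Theorem \ref{thm: UF} these norms approximate $|||\cdot|||$ uniformly on bounded sets. Thus the approximation comes for free and the whole matter reduces to showing that each $|||\cdot|||_r$ is strictly convex. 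Since $|||\cdot|||_r$ and $\|\cdot\|$ are bi-Lipschitz on $B_r$ and $k(0,\cdot)$ is convex, strict convexity of $|||\cdot|||_r$ is equivalent to the assertion that $\S_k (0,r)$ contains no nondegenerate segment; that is, using convexity of $k(0,\cdot)$,
\[ k(0,x)=k(0,y)=k\Big(0,\tfrac{x+y}{2}\Big)=r \ \Longrightarrow\ x=y. \]

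For the core step I would argue by contradiction. Assume $x\neq y$ lie on $\S_k (0,r)$ together with their midpoint. Let $\gamma,\lambda\colon [0,r]\to\Omega$ be quasihyperbolic geodesics from $0$ to $x$ and to $y$, parametrised by quasihyperbolic length, so that $\|\gamma'\| = d(\gamma,\partial\Omega)$ and $\|\lambda'\| = d(\lambda,\partial\Omega)$ a.e. The averaged path $\sigma=\tfrac12(\gamma+\lambda)$ joins $0$ to $\tfrac{x+y}{2}$ inside $\Omega$, and Lemma \ref{lm: PLconvex} together with \eqref{eq: *} gives $\ell_k(\sigma)\le r$. Because the midpoint already lies on the sphere we must in fact have $\ell_k(\sigma)=r$, forcing equality in \eqref{eq: *} for a.e.\ $t$. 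Inspecting the two sources of that inequality, namely the triangle inequality in $\X$ and the concavity (here affineness) of $d(\cdot,\partial\Omega)$ along segments, equality forces $\|\gamma'(t)+\lambda'(t)\| = \|\gamma'(t)\|+\|\lambda'(t)\|$ a.e. Strict convexity of $\|\cdot\|_\X$ then yields $\gamma'(t)/\|\gamma'(t)\| = \lambda'(t)/\|\lambda'(t)\|$ a.e., which by the unit-speed normalisation is precisely
\[ \gamma'(t)\,w(\gamma(t)) = \lambda'(t)\,w(\lambda(t)) \quad \text{a.e.}, \qquad w:=\tfrac{1}{d(\cdot,\partial\Omega)}. \]
Since $w$ is Lipschitz and bounded below on $B_r$ and $\X$ has the RNP, Lemma \ref{lm: exact} applies and gives $\gamma=\lambda$, whence $x=y$, a contradiction. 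I emphasise that this core step uses only strict convexity and the RNP; no reflexivity enters once the geodesics are in hand, because Lemma \ref{lm: exact} converts the \emph{exact} pointwise parallelism of velocities directly into equality of the paths.

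The main obstacle I anticipate is precisely the existence of the geodesics $\gamma,\lambda$. When $\X$ is reflexive, or more generally locally weak-star compact (e.g.\ a dual space), minimisers exist by the Martio--V\"ais\"al\"a argument and the proof above goes through unchanged. In the bare RNP, non-reflexive case one is forced to work with quasigeodesics, and here the passage from approximate to exact equality is genuinely delicate: unlike the uniformly convex situation of Theorem \ref{thm: unifconv}, the ultrapower variational principle of Theorem \ref{thm: var} is unavailable, since strict convexity (and the RNP) are not preserved under ultrapowers, whereas uniform convexity is. Consequently there is no modulus converting the vanishing of the integrated convexity defect into convergence of the normalised velocities. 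I would therefore either restrict the general statement to spaces in which the relevant geodesics exist, or supply a separate direct argument retaining exact minimisers; this is the step I expect to require the real work.
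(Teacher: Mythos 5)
Your proposal is correct in substance but does considerably more work than the paper, whose entire proof of this proposition is essentially a one-line citation: the strict convexity of quasihyperbolic balls in convex domains of strictly convex RNP spaces is quoted from \cite{RasilaTalponen12}, and the approximation statement then follows from the general Minkowski-functional framework exactly as you describe. What you have done is reconstruct the content of the cited result: reduce strict convexity of the norm $M_{B_r}(\cdot)$ to the absence of nondegenerate segments on $\S_k(0,r)$, force equality a.e.\ in the pointwise inequality \eqref{eq: *} along the averaged path, use strict convexity of $\|\cdot\|$ to turn that equality into exact positive parallelism of the velocities, and then invoke the rigidity Lemma \ref{lm: exact} to conclude $\gamma=\lambda$. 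This is the right mechanism, and your observation that the ultrapower variational principle of Theorem \ref{thm: var} is unavailable here (strict convexity and the RNP do not pass to ultrapowers, and there is no modulus quantifying the convexity defect) while Lemma \ref{lm: exact} handles the \emph{exact} equality case is precisely why the paper keeps Lemma \ref{lm: exact} in the bare RNP setting instead of folding it into the superreflexive machinery. One small technical remark: the equality analysis in \eqref{eq: *} is cleanest if you parametrise both geodesics proportionally to quasihyperbolic arc length, since then the two terms on the right-hand side are equal and the only possible sources of strict inequality are the triangle inequality in the numerator and the concavity of $d(\cdot,\partial\Omega)$ in the denominator, exactly as you assert.

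The one issue you flag --- existence of the geodesics $\gamma,\lambda$ when $\X$ is strictly convex with the RNP but not reflexive --- is a genuine gap for your argument as written, and you are right that a quasigeodesic substitute is delicate: with only strict convexity the vanishing of the integrated defect does not convert into convergence of the normalised velocities, so an additional limiting or compactness argument is required. The paper does not address this point either; it simply delegates the whole matter to \cite{RasilaTalponen12}. So your proof is complete under the additional hypothesis that the relevant geodesics exist (for instance under reflexivity, via \cite{MartioVaisala}), and your honest identification of the remaining step locates exactly where the content of the cited reference lies.
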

\begin{proof}
The argument is based on the fact that quasihyperbolic balls in a convex domain of a strictly convex Banach space with the RNP are strictly convex as well,
see \cite{RasilaTalponen12}. Note that it is not essential here whether $\overline{\Omega}$ is strictly convex or not.
\end{proof}

We do not know if quasihyperbolic balls in symmetric convex domains of reflexive LUR Banach spaces and centered at the origin induce LUR norms 
via the Minkowski functional.

\section{Non-geodesic domains}

There exists examples of domains in Hilbert spaces, which are not geodesic with respect to the quasihyperbolic metric. For more details see \cite[Example 2.9]{Vai98} and \cite[Remark 3.5]{Vai99}. However, in these examples the complement of the domain is uncountable. We give an example of a domain that is not geodesic with respect to the quasihyperbolic metric and the complement of the domain is countable.

\begin{example}
  Let
  \[
    \Omega = \ell^2 \setminus \left( \{ 0 \} \cup \left\{ \pm \sqrt{2}\left( 1-\frac{1}{i} \right) e_i \right\}_{i=2}^\infty \right).
  \]
  There does not exists a quasihyperbolic geodesic from $-e_1$ to $e_1$ in $\Omega$.
\end{example}
\begin{proof}
  In $\Omega' = \ell^2 \setminus \{ 0 \}$ the geodesics $-e_1 \curvearrowright e_1$ are half circles with center at 0. Since $G' \subset G$ we have
  \[
    k_\Omega(-e_1,e_1) \ge k_{\Omega'}(-e_1,e_1) = \pi.
  \]
  Let $\gamma$ be a curve joining $-e_1$ to $e_1$. If $\gamma$ is not a half circle, then
  \[
    k_\Omega(\gamma) \ge k_{\Omega'}(\gamma) > \pi.
  \]
  If $\gamma$ is a half circle, then for some $n$ we have $\min \{ d(e_n,\gamma) , d(-e_n,\gamma) \} < 1$ implying $k_\Omega(-\gamma) > \pi$.
  
  Let us consider $\gamma_n$ to be a half circle with center at origin, end points $-e_1$ and $e_1$ such that for each $x = (x_1,x_2,\dots) \in \gamma_n$ we have
  \[
    x_1 \in [-1,1], \quad 0 = x_2 = \cdots = x_{n-1} = x_{n+2} = \cdots, \quad x_{n} = x_{n+1}
  \]
  for some $n \ge 2$. By construction of $\Omega$ we have $\ell_k (\gamma_n) > \ell_k (\gamma_{n+1}) > \pi$ and $\lim_{n \to \infty} \ell_k (\gamma_n) = \pi$. Thus, there does not exist a quasihyperbolic geodesic.
\end{proof}

%%%%%%%%%%%%%%%%%%%%%%%%%%%%%%%%%%%%%%%%%%%%%%%%%%%%%%%
\section{On uniqueness of geodesics in finite-dimensional domains}
%\section{Geodesics in a plane domain}

Finally, we take this opportunity to address some related finite-dimensional questions involving the quasihyperbolic metric that arouse 
during our research.

In the upper half-plane the quasihyperbolic geodesics agree with the hyperbolic geodesics and are thus unique. However, if a plane domain is not simply connected the quasihyperbolic geodesics need not be unique. The following results shows that for any $n \ge 3$ there exists a domain and at least two geodesics with exactly $n$ common points.

\begin{lem}\label{lem:geodesic result}
  For each $n=2,3,\dots$ there exists a domain $G \subsetneq \R^2$, points $x,y \in \Omega$ and quasihyperbolic geodesics $\gamma_1 \colon x \curvearrowright y$ and $\gamma_2 \colon x \curvearrowright y$ such that $\# ( \gamma_1 \cap \gamma_2 ) = n$.
\end{lem}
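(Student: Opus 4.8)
The plan is to exploit a mirror symmetry of the domain. Concretely, I would construct a domain $G\subsetneq\R^2$ that is invariant under the reflection $\sigma$ in the $x$-axis $L=\{(u,0):u\in\R\}$, and place both endpoints $x,y\in L$ on the fixed line. Since $d(\cdot,\partial G)$ is then $\sigma$-invariant, $\sigma$ is an isometry of $(G,k)$; hence whenever $\gamma_1$ is a quasihyperbolic geodesic from $x$ to $y$, the reflected curve $\gamma_2:=\sigma\circ\gamma_1$ is again a geodesic between $\sigma(x)=x$ and $\sigma(y)=y$ of the same length, so both realize $k(x,y)$. Provided $\gamma_1$ is not contained in $L$, the two geodesics are distinct, and this is the source of the non-uniqueness.

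The second step is to read off the intersection. Because $\gamma_2=\sigma(\gamma_1)$, one has $\gamma_1\cap\gamma_2=\gamma_1\cap\sigma(\gamma_1)$. Every point where $\gamma_1$ meets $L$ is fixed by $\sigma$ and therefore lies on both curves, while any intersection away from $L$ must occur in $\sigma$-pairs. The design goal is thus to arrange that $\gamma_1$ meets $L$ in exactly $n$ isolated points and has no intersection with its mirror image off $L$; then $\#(\gamma_1\cap\gamma_2)=n$. The base case $n=2$ is the punctured plane $\R^2\setminus\{0\}$, the planar analogue of the Example above: the two global geodesics from $-e_1$ to $e_1$ are the upper and lower unit semicircles, they are interchanged by $\sigma$, and they meet only at the two endpoints $\pm e_1\in L$. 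Note that although the two semicircles are mutually tangent to $L$ there, the puncture at the origin repels them apart in between, so the intersection really is the two \emph{isolated} points.

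For general $n$ I would iterate this repelling mechanism along $L$. The idea is to place a $\sigma$-symmetric obstacle set consisting of a chain of $n-1$ \emph{repellers} on, or straddling, the axis, separated by $n-2$ interior passage points $P_1,\dots,P_{n-2}\in L$, with the endpoints $x=P_0$ and $y=P_{n-1}$ at the two ends; the obstacles are to be calibrated so that the length-minimizing geodesic is forced to pass through each $P_k$ on the axis (meeting $L$ there) while being pushed strictly off $L$ between consecutive passage points, exactly as the origin pushes the two semicircles apart in the base case. Counting the forced meetings of $\gamma_1$ with $L$ then yields the two endpoints together with the $n-2$ interior passage points, i.e. $n$ isolated common points of $\gamma_1$ and $\gamma_2$.

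The main obstacle is the simultaneous fulfilment of two competing requirements. On one hand, $\gamma_1$ and $\gamma_2$ must be genuine quasihyperbolic geodesics in the sense of the paper, i.e. \emph{global} minimizers realizing $k(x,y)$; this forces me to block, by obstacles, every route that would let a shorter path bypass the intended passage points, rather than merely producing locally minimizing weaving curves. On the other hand, the two mirror curves must meet in exactly $n$ isolated points and must not share a whole sub-arc of $L$: this degenerates precisely when the axis is locally the deepest locus between two repellers, so the obstacles have to sit close enough (as in the punctured plane) that between consecutive passage points the two arcs are repelled strictly to opposite sides of $L$. Reconciling these two demands — choosing the positions and sizes of the $n-1$ obstacles so that weaving is globally optimal yet the crossings stay isolated and number exactly $n$ — is the delicate, quantitative part; existence of the minimizers themselves is automatic in the planar setting. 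I expect that a careful choice of $n-1$ points, or small symmetric slits, on the axis, with spacing calibrated against the base case, makes both properties hold and produces the required example for every $n\ge 2$.
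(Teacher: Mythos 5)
Your proposal is essentially the paper's own argument: the paper takes a bounded rectangle, removes $n-1$ punctures on the axis (your repellers) and $n-2$ symmetric vertical slits leaving gaps of height $1$ around the axis (which force your passage points), takes $\gamma_1$ to be the weaving geodesic of circular arcs in the closed upper half-plane and $\gamma_2$ its reflection, so that the $n$ common points are exactly the meetings of $\gamma_1$ with the axis, with the punctured plane serving as the $n=2$ base case just as you describe. The only part you leave open --- the quantitative calibration of the obstacles --- is settled in the paper by the explicit choice of punctures at $0,\sqrt{3},\dots,(n-2)\sqrt{3}$ with slits at the midpoints, and its verification there is no more detailed than your sketch.
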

\begin{proof}
  In the case of $n=2$ we can choose $\Omega = \R^2 \setminus \{ 0 \}$, $x = -e_1$ and $y = e_1$. Now there exists exactly two geodesics joining $x$ and $y$, which are half circles, and we choose them to be $\gamma_1$
 and $\gamma_2$. We obtain $\# ( \gamma_1 \cap \gamma_2 ) = 2$ and the assertion follows.
 
  We assume that $n \ge 3$. Let us first construct the domain $\Omega$. We define rectangular $R_n = \{ z = (z_1,z_2) \in \R^2 \colon z_1 \in (-1,(n-2)\sqrt{3}+1) , \, z_2 \in (-1,1) \}$ and point set $P_n = \{ 0, \sqrt{3}, \dots , (n-2)\sqrt{3} \}$. Next we define the set
  \[
    L_n = \left\{ z \in \R^2 \colon z_2 \le -\tfrac{1}{2}, \, z_2 \ge \tfrac{1}{2} , \, z_1 \in \left\{ \tfrac{\sqrt 3}{2}, \sqrt{3}+\tfrac{\sqrt 3}{2}, \dots , (n-3)\sqrt{3} + \tfrac{\sqrt 3}{2} \right\} \right\}.
  \]
  We define
  \[
    \Omega_n = R_n \setminus \left( P_n \cup L_n \right).
  \]
  \begin{figure}[ht]
    \includegraphics[width=10cm]{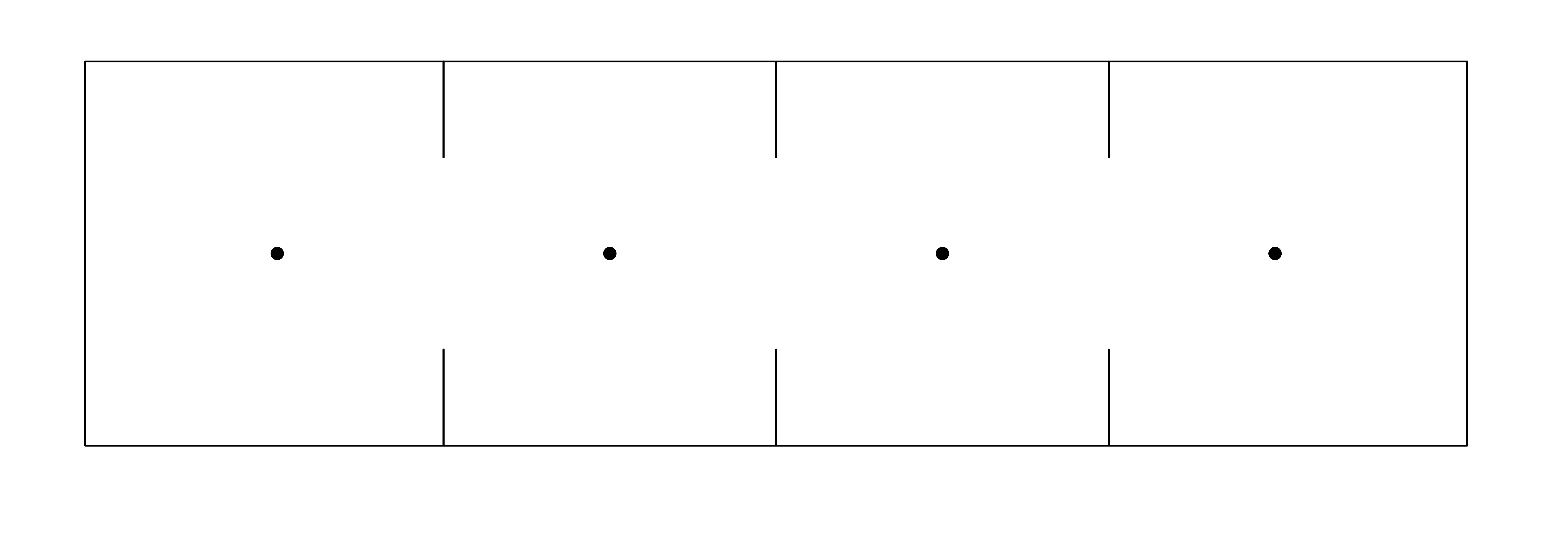}
    \caption{The domain $\Omega_5 = R_5 \setminus \left( P_5 \cup L_5 \right)$.} \label{fig:example domain}
  \end{figure}

  We choose $x = -\tfrac{1}{2}e_1$ and $y = ((n-2) \sqrt{3} + \tfrac{1}{2}) e_1$. Finally, we find the geodesics $\gamma_1$ and $\gamma_2$. Let $\gamma_1$ be the geodesic, which consists of circular arcs and is contained in the closed upper half-plane $\{ z = (z_1,z_2) \in \R^2 \colon z_1 \ge 0 , \, z_2 \in \R_n \}$, see Figure \ref{fig:example domain}. Then
  \[
    \gamma_1 \cap \{ z = (z_1,z_2) \colon z_1 = 0 \} = \left\{ 0, \tfrac{\sqrt{3}}{2}e_1, \tfrac{3\sqrt{3}}{2}e_1, \dots , \tfrac{(2n-5)\sqrt{3}}{2}e_1, \left( (n-2)\sqrt{3} + \frac12 \right) e_1 \right\}.
  \]
  \begin{figure}[ht]
    \includegraphics[width=75mm]{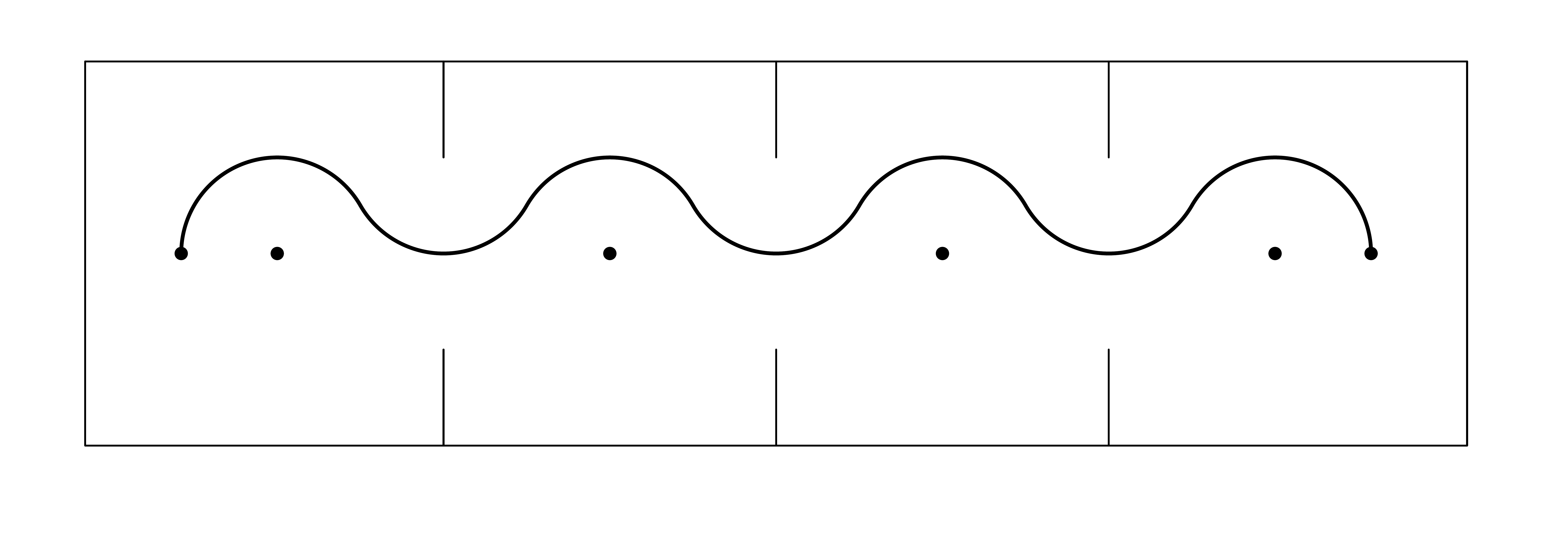}\hspace{5mm}
    \includegraphics[width=75mm]{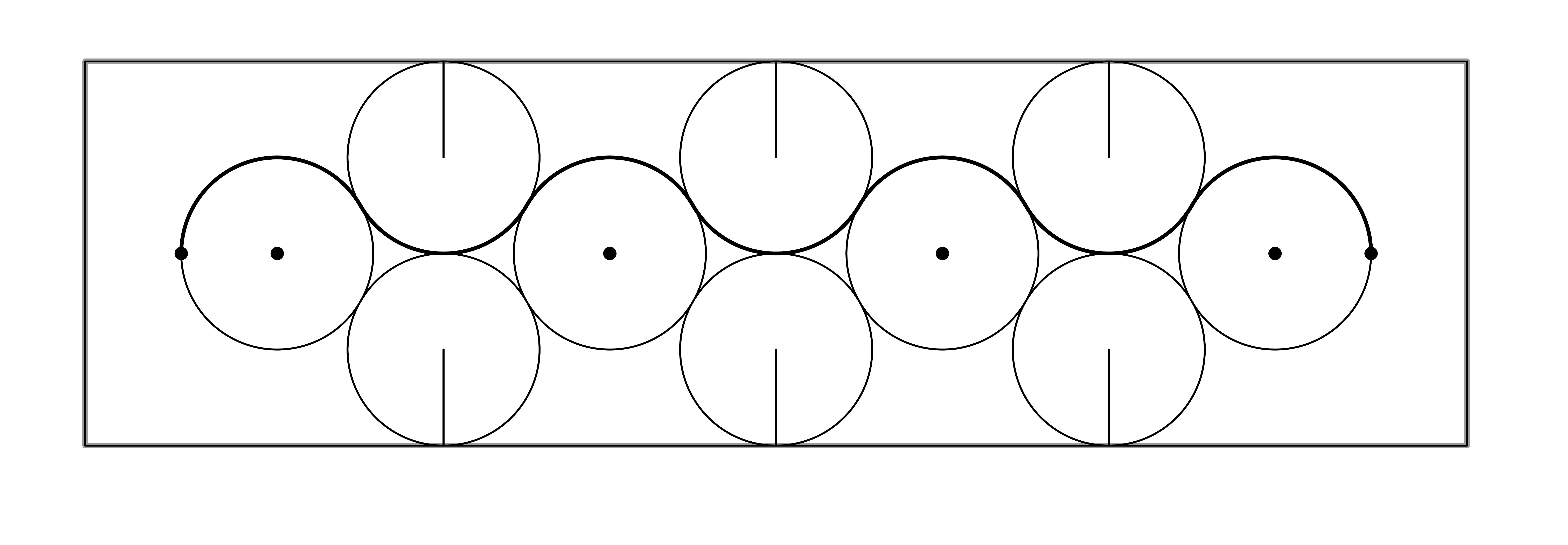}
    \caption{Geodesic $\gamma_1$ in the proof of Lemma \ref{lem:geodesic result}.} \label{fig:example domain}
  \end{figure}
   Let $\gamma_2$ be the reflection of $\gamma_1$ across the $x_1$-axis. Now
  \[
    \gamma_1 \cap \gamma_2 = \left\{ 0, \tfrac{\sqrt{3}}{2}e_1, \tfrac{3\sqrt{3}}{2}e_1, \dots , \tfrac{(2n-5)\sqrt{3}}{2}e_1, \left( (n-2)\sqrt{3} + \frac12 \right) e_1 \right\}
  \]
  implying $\# ( \gamma_1 \cap \gamma_2 ) = n$ and the assertion follows.
\end{proof}

\begin{rem}
  Note that Lemma \ref{lem:geodesic result} is true for all geodesics $\gamma_1$ and $\gamma_2$ joining the points $x$ and $y$. There are $2^{n-1}$ of such geodesics.
\end{rem}

Next we give an example of a strictly starlike domain, which contains arbitrarily short quasihyperbolic geodesics, which cannot be uniquely prolonged.

\begin{example}\label{example:prolongation}
  Let us consider the polygon $P \subset \R^2$ with vertices at $(-4,1)$, $(-1,1)$, $(-1,4)$, $(4,4)$, $(4,-4)$, $(-1,-4)$, $(-1,-1)$ and $(-4,-1)$.
  
  Let first $x=(-2,0)$, $y=(-1,0)$, $z_1=(0,1)$ and $z_2=(0,-1)$. Now the quasihyperbolic geodesic $x \curvearrowright y$ is the Euclidean line segment $[x,y]$ and geodesics $x \curvearrowright z_1$ and $x \curvearrowright z_2$ pass through point $y$ and contain the geodesic $x \curvearrowright y$. Thus the geodesic $x \curvearrowright y$ cannot be uniquely prolonged.
  
  \begin{figure}[ht]
    \includegraphics[height=50mm]{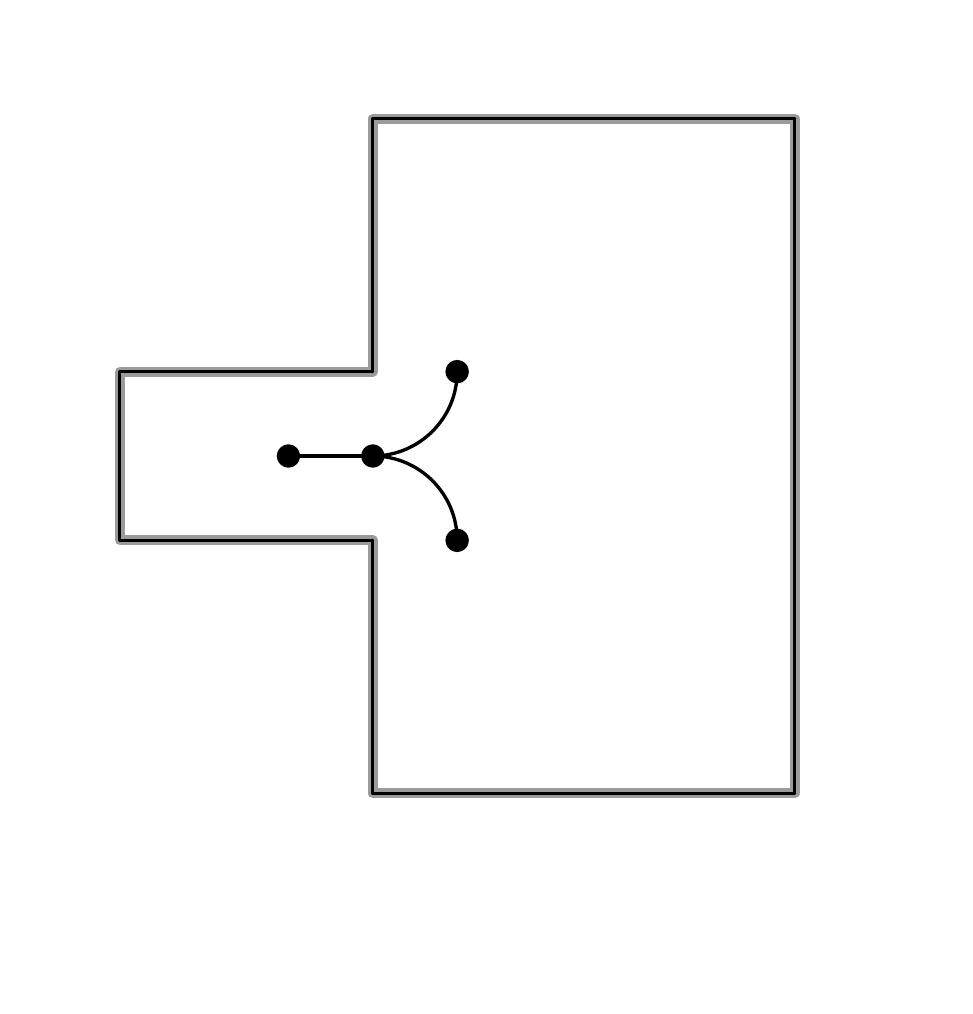}
    \caption{Polygon $P$ and geodesics in Example \ref{example:prolongation}.}
  \end{figure}  
  
  Let then $x=(-t-1,0)$ for some $t \in (0,1)$. By simple computation we notice that $k_P(x,y) = t$ and similarly as above the geodesic $x \curvearrowright y$ cannot be uniquely prolonged.
\end{example}

\begin{prop}
There exists a star-like domain $\Omega \subset \R^n$, $n\geq 3$, such that not all the quasihyperbolic geodesics are unique. Moreover, this domain 
can be chosen in such a way that the function $x \mapsto 1/d(x,\partial \Omega)$ does not have any local maxima. 
\end{prop}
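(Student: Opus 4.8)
The plan is to use the simplest radial obstacle compatible with star-likeness, a half-line, and to read the quasihyperbolic metric off the resulting product structure. Concretely, set $L=\{te_1\colon t\le 0\}$ and let $\Omega=\Rn\setminus L$ with $n\ge 3$; this is an open, connected set with $\partial\Omega=L\neq\emptyset$. I would first check it is star-like with respect to $e_1$: a point of a segment $[e_1,p]$ has the form $(1-s)e_1+sp$, and if it lay on $L$ its coordinates $2,\dots,n$ would vanish, forcing $sp_2=\dots=sp_n=0$; a short case split (either $s=0$, giving $e_1\notin L$, or $p=(p_1,0,\dots,0)$ with $p_1>0$, giving a segment inside the positive axis) shows $[e_1,p]\cap L=\emptyset$. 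Hence $\Omega$ is a star-like domain.

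Next I would compute the weight. Projecting $x$ orthogonally onto the line $\R e_1$ gives $(x_1,0,\dots,0)$, which lies on $L$ exactly when $x_1\le 0$, so
\[
d(x,\partial\Omega)=\begin{cases}\rho(x):=\sqrt{x_2^2+\dots+x_n^2}, & x_1\le 0,\\ |x|, & x_1\ge 0.\end{cases}
\]
Since $1/d$ has a local maximum precisely where $d$ has a local minimum, it suffices to verify that $d$ has no interior local minimum: on $\{x_1>0\}$ one has $\nabla d=x/|x|\neq 0$, on $\{x_1<0\}$ one has $\nabla d=(0,x_2,\dots,x_n)/\rho\neq 0$, and at an interface point $(0,x_2,\dots,x_n)$ with $\rho>0$ the value $d=\rho$ can always be strictly decreased by moving towards the axis. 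Thus $x\mapsto 1/d(x,\partial\Omega)$ has no local maxima.

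The key observation for non-uniqueness is that on $\{x_1<0\}$, where $d=\rho$, the quasihyperbolic line element becomes
\[
\frac{|dx|^2}{d^2}=\frac{dx_1^2+d\rho^2}{\rho^2}+d\omega^2,
\]
i.e. it is isometric to the Riemannian product $\UH\times\mathbb{S}^{n-2}$, with $\rho>0$ the height in $\UH$ and $\omega$ the direction of $(x_2,\dots,x_n)$. Fix $M>\sinh\pi$ and take $A=-Me_1+e_2$, $B=-Me_1-e_2$; these correspond to the same point $P=(-M,1)\in\UH$ and to two antipodes of $\mathbb{S}^{n-2}$. Any great semicircle of $\mathbb{S}^{n-2}$ joining the antipodes, taken over $P$, stays in $\{x_1<0\}\subset\Omega$ and has quasihyperbolic length exactly $\pi$; for $n\ge 4$ there are infinitely many, for $n=3$ exactly two (through $\pm e_3$). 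It remains to prove these are genuine geodesics, that is $k(A,B)=\pi$.

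For the lower bound I would argue as follows. Let $\gamma$ join $A$ to $B$ in $\Omega$. If $\gamma\subset\{x_1\le 0\}$, then in the product metric its length dominates that of its $\mathbb{S}^{n-2}$-projection, which is at least the antipodal distance $\pi$. If instead $\gamma$ meets $\{x_1>0\}$, consider its initial arc up to the first hit of $\{x_1=0\}$; this arc lies in $\{x_1\le 0\}$, so its length dominates the $\UH$-distance from $P$ to the vertical geodesic $\{x_1=0\}$, which equals $\arsh M>\pi$ by the choice of $M$. In either case $\ell_k(\gamma)\ge\pi$, so $k(A,B)=\pi$ and the semicircular arcs are geodesics; being distinct, they show that not all quasihyperbolic geodesics are unique. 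The main obstacle is exactly this lower bound: one must exclude that a path exploiting $\{x_1>0\}$ (where the positive axis belongs to $\Omega$ and the angular coordinate can be unwound cheaply) undercuts $\pi$. The product representation on $\{x_1<0\}$ is what renders this tractable, reducing both estimates to elementary distance computations in $\UH$ and on the sphere, and the role of taking $M$ large is precisely to make the hyperbolic cost of reaching $\{x_1=0\}$ exceed $\pi$.
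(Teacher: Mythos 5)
Your proof is correct, and it is a genuinely different construction from the paper's. The paper removes the axis from a bounded half-cylinder and caps it with a half-ball, obtaining non-uniqueness between the two points $\pm\tfrac12 e_1+e_n$ essentially by rotational/reflective symmetry about the deleted axis; the argument there is only sketched, and in particular the competing routes that dip into the half-ball (where the axis is not removed) are not estimated. You instead take $\Omega=\Rn\setminus\{te_1\colon t\le 0\}$, which is star-like with respect to $e_1$, and exploit the fact that on $\{x_1<0\}$ the quasihyperbolic metric is \emph{exactly} the Riemannian product $\UH\times\mathbb{S}^{n-2}$ in the coordinates $(x_1,\rho,\omega)$. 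This buys you a complete, quantitative argument: the two (for $n=3$; infinitely many for $n\ge4$) great-semicircle arcs over the fixed point $P=(-M,1)\in\UH$ have length exactly $\pi$, paths staying in $\{x_1\le 0\}$ are bounded below by the spherical projection (antipodal distance $\pi$), and paths venturing into $\{x_1>0\}$ already pay $\arsh M>\pi$ in the $\UH$ factor to reach $\{x_1=0\}$ once $M>\sh\pi$; your verification that $d$ has no interior local minimum (hence $1/d$ no local maximum) via the explicit formula $d=\rho$ for $x_1\le 0$ and $d=|x|$ for $x_1\ge 0$ is also complete. The only cosmetic caveat is that your domain is unbounded and its boundary is the ray itself, which is a more degenerate ``star-like domain'' than the paper's bounded example, but nothing in the statement requires boundedness, and your argument is in fact the more rigorous of the two.
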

\begin{proof}
Consider the following domain
%\[\Omega = \left\{(x_i)\in \R^n \colon \sqrt{\sum_{i<n} |x_i|^2 } < |x_n| <  \sqrt{\sum_{i<n} |x_i|^2 } +1\right\}.\]
%
%Riku: en ole varma ymm\"arr\"ank\"o t\"am\"an oikein, mutta minusta t\"all\"a on globaali maksimi ainoastaan pisteiss\"a $(0,0, \dots , \pm 2 \sqrt{2} / (1+2 \sqrt{2})$. Pit\"a\"ak\"o lis\"aksi origon kuulua t\"ahtim\"aiseen alueeseen?
%
%Haluaisimme varmaan mieluummin vaikka alueen
\[
  \Omega = \left\{(x_i)\in \R^n \colon 0 < \sqrt{\sum_{i<n} |x_i|^2 } < 1 , \, x_n\ge \frac{1}{2} \right\} \cup \left\{(x_i)\in B(1/2,1) \colon  x_n < 1/2 \right\}.
\]
Now the geodesics from $x=x_1/2 + x_n$ to $y=-x_1/2+x_n$ are not unique. Clearly, the function $x \mapsto 1/d(x,\partial \Omega)$ does not have any local maxima, and its global maximum is $1/2$.
\end{proof}

\end{document}